\numberwithin{equation}{section}
\theoremstyle{plain}
\newtheorem{theorem}{Theorem}[section]
\newtheorem{corollary}[theorem]{Corollary}
\newtheorem{remark}[theorem]{Remark}\newtheorem{example}[theorem]{Example}
\theoremstyle{definition}
\newtheorem{definition}[theorem]{Definition}
\newtheorem{proposition}[theorem]{Proposition}    
\theoremstyle{remark}
\newcommand\C{\mathbb C}      
\newcommand\R{\mathbb R}        
\newcommand\Z{\mathbb Z}         
\newcommand\N{\mathbb N}        
\newcommand\Ha{\mathbb H}       
\newcommand\D{\mathbb D}     
\renewcommand\Im{\text{Im}}        
\renewcommand\Re{\text{Re}}
\newcommand{\hcap}{\operatorname{hcap}} 
\newcommand{\supp}{\operatorname{supp}} 
\newcommand{\til}{\mu} 
\newcommand{\meu}{\alpha}
\begin{document}
\parindent 0pt 

\setcounter{section}{0}

\title{Tightness results for infinite-slit limits of the chordal Loewner equation}
\date{\today}
\author{
Andrea del Monaco \and
Ikkei Hotta\thanks{Supported by the JSPS KAKENHI Grant no. 26800053.}\and
 Sebastian Schlei{\ss}inger\thanks{Supported by the ERC grant  ``HEVO - Holomorphic Evolution Equations'' no. 277691.}}
\maketitle

\abstract{In this note we consider a multi-slit Loewner equation with constant coefficients that describes the growth of multiple SLE  curves connecting $N$ points on $\R$ to infinity within the upper half-plane. For every $N\in\N$, this equation provides a measure valued process $t\mapsto \{\meu_{N,t}\},$ and we are interested in the limit behaviour as $N\to\infty.$ We prove tightness of the sequence $\{\meu_{N,t}\}_{N\in\N}$ under certain assumptions and address some further problems.}\\

{\bf Keywords:} chordal Loewner equation, stochastic Loewner evolution, multiple SLE, complex Burgers equation, tightness, quadratic differentials\\

{\bf 2010 Mathematics Subject Classification:}  60J67, 37L05.

\tableofcontents
\parindent 0pt

\section{Introduction}\label{introduction}

In \cite{MS15}, the second and third author noted that the conformal mappings for a certain multiple SLE (Schramm-Loewner evolution) process for $N$ simple curves in the upper half-plane $\Ha$ converges as $N\to\infty$.  The deterministic limit has a simple description: The conformal mappings $f_t:\Ha\to\Ha$ satisfy the Loewner PDE
$$\frac{\partial f_t(z)}{\partial t} = - \frac{\partial f_t(z)}{\partial z}\cdot M_t(z), \quad f_0(z)=z\in\Ha,$$
where $M_t$ satisfies the complex Burgers equation 
$$\frac{\partial M_t(z)}{\partial t} = -2 \frac{\partial M_t(z)}{\partial z}  \cdot M_t(z),$$
see Section \ref{Princess} for more details. In several situations, partial differential equations of this type appear to describe the limit of $N$-particle systems; see  \cite{MR1176727, MR1217451, MR1440140}.\\

In Section \ref{multipleSLE}, we consider again the same multiple SLE measure for $N$ curves connecting $N$ points on $\R$ with $\infty.$ We describe the growth of these curves by a Loewner equation with weights that correspond to the speed for these curves in the growth process, and we obtain an abstract differential equation for limit points as $N\to\infty$ (Corollary \ref{thm:2}).\\
Furthermore, in Section \ref{Confucius} we see that an equation of a similar type also appears in the limit behaviour of a Loewner equation describing the growth of trajectories of a certain quadratic differential.

\section{Tightness of a multiple SLE process}\label{multipleSLE}

\subsection{Geometry and Loewner Theory}
In this section we briefly recall the general background of hulls in the upper half-plane and the chordal Loewner equation.\\

A domain $D\subsetneq\hat{\C}$ is said to be a \emph{Jordan domain} if $\partial D$ is homeomorphically equivalent to the unit circle $\mathbb{T}=\partial\D$.
Let $\Gamma$ be a subset of $\overline{D}$ such that there exist some $T>0$ and a homeomorphism $\gamma\colon[0,T]\longrightarrow\Gamma$ with $\gamma(0,T)\subset D$ and $\gamma(0)\in\partial D$.
Then, if $\gamma(T)\in D$, the set $\Gamma \cap D = \Gamma \setminus \gamma(0)$ is said to be a \emph{slit} in $D$, and if $\gamma(T)\in \partial D$ as well, $\Gamma$ is referred to as a \emph{chord} (in $D$).

Since by the Riemann Mapping Theorem (see, \textit{e.g.},~\cite[Section 1.1]{Pom:1975}) $D$ is conformally equivalent to the upper half-plane $\Ha = \{z\in\C \mid \Im(z)>0\}$, it suffices to consider the case $D=\Ha$ and $\gamma(0)\in\R$. 
In this setting, in particular, one may also introduce the more general notion of \emph{hull}, i.e. a subset $A\subset\Ha$ such that $\overline{A}\cap\Ha= A$ and $\Ha\setminus A$ is simply connected.\\
It is known that if $A\subset\Ha$ is a bounded hull, then there exists a unique conformal mapping $g_A:\Ha\setminus A\to\Ha$ with \emph{hydrodynamic normalization} (see \cite[Proposition 3.34]{Lawler:2005}), meaning that
$$g_A(z) = z+\frac{b}{z}+\tilde{g}(z) \quad \text{as $z\to\infty$}$$ 
for a holomorphic function $\tilde{g}$ with $\angle\lim_{z\to \infty}z\cdot\tilde{g}(z)=0$.\\
The quantity $b=\hcap(A)\geq0$ is called the \emph{half-plane capacity} of $A$.\\
The mapping $g_A$ can be embedded into the solution of a Loewner equation as follows. Let $T>0$ be defined by $2T=\hcap(A)$. Then there exists a family $\{\mu_t\}_{t\in[0,T]}$ of probability measures on $\R$, with the property that $t\mapsto \int_\R\frac{1}{z-u}\,\mu_t(du)$ is measurable for every $z\in\Ha$, such that the solution $\{g_t\}_{t\in[0,T]}$  of the chordal Loewner equation 
\begin{equation}\label{loop}
\left\lbrace
\begin{aligned}
&\frac{dg_t(z)}{dt} = \int_\R \frac{2}{g_t(z) - u}\,\mu_t(du) \quad \text{for almost every $t\in[0, T]$}\\
&g_0(z)=z\in\Ha
\end{aligned}
\right.
\end{equation} satisfies $g_A = g_T.$
This follows from \cite[Theorem 5]{MR1201130} and considering the time-reversed flow and the inverse mapping $g_A^{-1}$.\\

Conversely, one can always solve \eqref{loop} and obtain conformal mappings with hydrodynamic normalization; see \cite[Theorem 4]{MR1201130} or \cite[Theorem 4.5]{Lawler:2005}.\\
For $z\in\Ha$ fixed, the solution $t\mapsto g_t(z)$ of \eqref{loop} may have a finite lifetime $T(z)>0$, namely $g_t(z)\in\Ha$ for all $t<T(z)$ and $\Im(g_t(z))\to0$ as $t\uparrow T(z)$.\\
If we fix a time $t>0$ and let $K_t=\{z\in\Ha \,|\, T(z)\leq t\},$ then $K_t$ is a (not necessarily bounded) hull  and the mapping $z\mapsto g_t(z)$  is the conformal mapping from $\Ha\setminus K_t$ onto $\Ha$ with hydrodynamic normalization. Furthermore, the hulls $K_t$ are strictly growing, i.e. $K_s\subsetneq K_t$ whenever $s < t,$ and $\hcap(K_t)=2t.$\\

When the hull $A$ is a slit $\Gamma$, equation \eqref{loop} necessarily has the form
\begin{equation}\label{loop2}
\frac{dg_t(z)}{dt} =  \frac{2}{g_t(z) - U(t)}, \quad g_0(z)=z\in\Ha,
\end{equation} with a unique, continuous driving function $U:[0,T]\to \R$ (see \cite{GM13}, and the references therein, for more details). In this case, we  obtain a parametrization $\gamma$ of $\Gamma$ by setting $\gamma(0,t]=K_t$, which is equivalent to requiring $\hcap(\gamma(0,t])=2t$. We call $\gamma$ the \emph{parametrization by half-plane capacity} of $\Gamma$.\\

More generally, if $A$ is the union of $n$ slits $\Gamma_1,...,\Gamma_n$ with pairwise disjoint closures, i.e. $\overline{\Gamma_j} \cap \overline{\Gamma_k}=\emptyset$ whenever $j\not=k,$ then \eqref{loop} must have the form
\begin{equation}\label{loop3}
\frac{dg_t(z)}{dt} =  \sum_{j=1}^n \frac{2 \lambda_j(t)}{g_t(z) - U_j(t)}, \quad g_0(z)=z\in\Ha,
\end{equation}
where $U_j:[0,T]\to \R$ are continuous and $\lambda_j:[0,T]\to [0,1]$ are measurable functions with $\sum_{j=1}^n \lambda_j(t)=1$ for every $t;$ see \cite[Theorem 2.54]{Boehm}. In this way, we obtain parametrizations  $\gamma_1,...,\gamma_n$ of $\Gamma_1,...\Gamma_n$ by requiring $K_t= \cup_{j=1}^n \gamma_j(0,t].$\\
It is worth noting that, for $n>1$, a representation of $A$ by \eqref{loop3} is not unique. For example, we could first generate slit $\Gamma_k$ only, i.e. $\lambda_k(t)=1=1-\lambda_{j}(t)$ for $j\not=k$ and $t$ small enough.
 \begin{remark}
The coefficients $\lambda_j(t)$ can be thought of as the speed of growth of the slit $\Gamma_{j}$ at time $t$. More precisely, we have the following relation: \\
Fix $j$ and $t_0\geq0,$ assume that $g_t$ is differentiable at $t=t_0$ and consider the curve $\tilde{\gamma}(h)=g_{t_0}(\gamma_{j}[t_0,t_0+h]).$ Let $b(h):=\hcap(\tilde{\gamma}(0,h])$ be the half-plane capacity of the slit $\tilde{\gamma}(0,h]$. Then $b(h)$ is differentiable at $h=0$ with $b'(0) = \lambda_{j}(t_0),$ see \cite[Theorem 2.36]{Boehm}.\end{remark}


\subsection{Single and Multiple SLE}
In what follows, $\kappa\in(0,4]$ is a fixed parameter and $D\subsetneq\C$ is a Jordan domain.

Fix two points $x,y \in \partial D$ and assume that $\partial D$ is analytic in neighbourhoods of $x$ and $y$.\\
The chordal Schramm-Loewner evolution (SLE) of a random curve $\Gamma\subset D$ for the data $D,x,y,\kappa$ can be viewed as a certain probability measure $\mu_{D,\kappa}(x,y)$ on the space of all chords connecting the points $x$ and $y$ within $D.$ As one property of SLE is conformal invariance, it suffices to describe the SLE when $D=\Ha$, $x=0$, and $y=\infty$.
 In this setting, the evolution of $\Gamma$ can be described efficiently as follows.  Let $\gamma$ be a parametrization of $\Gamma$ with $\gamma(0)=0$ and assume that $\gamma[0,T]$ is parametrized by half-plane capacity for every $T>0$. The random conformal mapping $g_t:=g_{\gamma(0,t]}$ then satisfies the Loewner equation
\begin{equation}\label{loewner1}
\frac{dg_t(z)}{dt} = \frac{2}{g_t(z) - \sqrt{\kappa}B_t}, \quad g_0(z)=z,
\end{equation}
where $B_t$ is a standard one-dimensional Brownian motion.\\
Notice that one may also consider SLE for $\kappa>4$. But then the measure is no longer supported on simple curves, and we are not interested in such a case here. For further information and a thorough treatment of SLE we refer to~\cite{Lawler:2005}.\\

Next, we describe multiple SLE as it was introduced in \cite{MR2310306}.\\

Let $N\in\N$ and fix $2N$ pairwise distinct points $p_1,...,p_{2N}\in \partial D$ in counter-clockwise order. Assume that $\partial D$ is analytic in a neighbourhood of $p_k$, $k=1,\ldots,2N$.\\
We call the pair $(\mathbf{x}, \mathbf{y})$ of two tuples $\mathbf{x}=(x_1,...,x_N),\mathbf{y}=(y_1,...,y_N)$ a \emph{configuration} for these points if 
\begin{itemize}
\item[a)] $\{x_1,...,x_N,y_1,...,y_N\}=\{p_1,...,p_{2N}\}$,
\item[b)] there exist $N$ pairwise disjoint chords $\gamma_k$ connecting $x_k$ to $y_k$ within $D$, $k=1,\ldots,N$,
\item[c)]  $x_1=p_1$ and $x_1,x_2,...,x_N,$ as well as $x_1,x_k,y_k$, for every $k\geq 2,$ are in counter-clockwise order.

\end{itemize}
The points in $\mathbf{x}$ can be thought of as the starting points of these chords. Then $\mathbf{y}$ represents the end points and the assumption in c) just prevents us from getting a new configuration by exchanging a starting point of one curve with its endpoint.
A simple combinatorial calculation gives that there exist 
$$C_N = \frac{(2N)!}{(N+1)!\,N!}$$

many configurations for $2N$ points.\\ 

Fix now a configuration $(\mathbf{x},\mathbf{y})$.
 The \emph{configurational multiple SLE} $Q_{D,\kappa}(\mathbf{x},\mathbf{y})$ is a positive finite measure on the space of all $N-$tuples $(\gamma_1,...,\gamma_N)$, where $\gamma_k$ is a  chord in $D$ connecting $x_k$ and $y_k$ and $\gamma_k\cap\gamma_j=\emptyset$ whenever $j\not=k$.  One may contruct the $Q_{D,\kappa}(\mathbf{x},\mathbf{y})$ by means of the Brownian loop measure (see \cite{MR2310306} for details).\\
If we let $H_{D,\kappa}(\mathbf{x},\mathbf{y})$ be the mass of $Q_{D,\kappa}(\mathbf{x},\mathbf{y}),$ then we can write $$Q_{D,\kappa}(\mathbf{x},\mathbf{y})=H_{D,\kappa}(\mathbf{x},\mathbf{y})\cdot \mu_{D,\kappa}(\mathbf{x},\mathbf{y})$$  for some probability measure $\mu_{D,\kappa}(\mathbf{x},\mathbf{y})$.\\

 Thus, one may view $Q_{D,\kappa}(\mathbf{x},\mathbf{y})$ as a probability measure for the underlying configuration with weight $H_{D,\kappa}(\mathbf{x},\mathbf{y})$. Then we may use such weights as partition functions to combine multiple SLE for different configurations. Namely, if $\mathbf{p}=(p_1,...,p_{2N})$ and $S(\mathbf{p})$ is the set of all configurations, then the probability for $(\mathbf{x},\mathbf{y})\in S(\mathbf{p})$ will be given by
\begin{equation}\label{QueenMom}
p(\mathbf{x},\mathbf{y})=\frac{H_{D,\kappa}(\mathbf{x},\mathbf{y})}{\sum_{(\mathbf{v},\mathbf{w})\in S(\mathbf{p})} H_{D,\kappa}(\mathbf{v},\mathbf{w})}.
\end{equation}

 \begin{example} Consider the case $N=2$ and $\kappa=3$. Then there are two possible configurations $C_1$ and $C_2$, and $\mu_{D,3}(\{C_1,C_2\})$ describes the scaling limit for the Ising model with corresponding boundary conditions (see~\cite{MR2515494}). The probability $p$ for obtaining configuration $C_1$ is given by $$p=\frac{H_{D,3}(C_1)}{H_{D,3}(C_1)+H_{D,3}(C_2)}.$$
\end{example}

On the other hand, $H_{D,\kappa}(\mathbf{x},\mathbf{y})$ may also be used to write down a Loewner equation that governs the growth of multiple SLE curves, see \cite[Section 4]{MR2310306}.

Again, because of conformal invariance, it suffices just to consider the case $D=\Ha$, where $p_1,...,p_{2N}\in \R\cup{\{\infty\}}$. In this setting, the number $H_{\Ha, \kappa}(\mathbf{x}, \mathbf{y})$ is known explicitly only for some special cases:
\begin{itemize}
\item[(i)] for $N=1$ and $(x,y)=(0,\infty)$, one simply takes $H_{\Ha, \kappa}(0,\infty)=1$ as a definition, which would then yield $Q_{D,\kappa}=\mu_{\Ha,\kappa}$, i.e. the chordal SLE probability measure as described in~\ref{loewner1};
\item[(ii)] if $N=1$ and $x,y\in\R$, then $H_{\Ha,\kappa}(x,y)=|y-x|^{-2b},$ $b=\frac{6-\kappa}{2\kappa}$;
\item[(iii)] a special case for $\kappa=2$ is given in \cite{MR2310306} (see Remark after Proposition 3.3);
\item[(iv)] for $N=2$, $H_{\Ha,\kappa}\big((x_1,x_2),(y_1,y_2)\big)$ can be expressed by a formula involving hypergeometric functions (see \cite[Proposition 3.4]{MR2310306}).
\end{itemize}
 
We point out that multiple SLE can also be approached by requiring certain properties for the multi-slit Loewner equation, which leads to local properties of $H_{D, \kappa}(\mathbf{x}, \mathbf{y})$ as a partition function. A framework for describing $H_{D, \kappa}(\mathbf{x}, \mathbf{y})$ as the solution to certain differential equations is discussed in the recent works~\cite{MR3294954, MR3294955, MR3296159, MR3296160, puregeo}. We also refer to the articles \cite{MR2004294, MR2187598, Graham2007, MR2358649}.

\begin{remark}
Notice that one may consider $Q_{\Ha,\kappa}(\mathbf{x}, \mathbf{y})$ also for a configuration where $y_j=y_k$ (or $x_j=x_k$,  
or both) for certain $j\not=k$. This is done by considering the disjoint case $y_j\not=y_k$ first and then taking a scaled limit.
\end{remark}

In particular, if $(x_1,...,x_N) = (\infty,...,\infty)$, then one has
\begin{equation}\label{infinity1}
H_{\Ha,\kappa}((x_1,...,x_N), \infty) := H_{\Ha,\kappa}((x_1,...,x_N), (\infty,...,\infty)) :=\displaystyle \prod_{1\leq j<k\leq N}(x_k-x_j)^{2/\kappa}.
\end{equation}
See~\cite[Section 4.6]{MR2187598}, and the references therein, for more details.

\subsection{The chordal Loewner equation for \texorpdfstring{$H_{\Ha,\kappa}((x_1,...,x_N), \infty)$}{}}
Let $N\in\N$ and $x_{N,1}<...<x_{N,N}$ be $N$ points on $\R$. The growth of $N$ random curves from $\mu_{\Ha,\kappa}((x_{N,1},...,x_{N,N}), \infty)$ can be described by a Loewner equation as follows:\\

First, choose $\lambda_{N,1},...,\lambda_{N,N}\in(0,1)$ such that $\sum_{k=1}^N\lambda_{N,k}=1.$\\

Next, we define $N$ random processes $V_{N,1},...,V_{N,N}$ on $\R$ as the solution of the SDE system
\begin{equation}\label{sigma}
 dV_{N,k}(t) = \sum_{j\not=k}\frac{2(\lambda_{N,k}+\lambda_{N,j})}{V_{N,k}(t)-V_{N,j}(t)}dt+\sqrt{\kappa \lambda_{N,k}}dB_{N,k}(t), \quad V_{N,k}(0)=x_{N,k},
\end{equation}
where $B_{N,1},...,B_{N,N}$ are $N$ independent standard Brownian motions and $\kappa\in[0,4]$. 
 Although multiple SLE was only defined for $\kappa\in(0,4]$, in this particular case one may also consider the deterministic case $\kappa=0$.

The corresponding $N$-slit Loewner equation
\begin{equation}\label{multi2}
\frac{d}{dt}g_{N,t}(z) = \sum_{k=1}^N\frac{2\lambda_{N,k}}{g_{N,t}(z)-V_{N,k}(t)}, \quad g_{N,0}(z)=z\in\Ha,
\end{equation}
describes the growth of $N$ multiple SLE curves growing from $x_{N,1},...,x_{N,N}$ to $\infty;$ see \cite{MR2187598}, p. 1130 (where the function $Z$ is the partition function \eqref{infinity1}, see equation (4) on p. 1138). The function $z\mapsto g_{N,t}(z)$ is the conformal  mapping $g_{\gamma_{N,1}[0,t]\cup ...\cup \gamma_{N,N}[0,t]}$ for $N$ random simple curves $\gamma_{N,k}:[0,\infty)\to \overline{\Ha}$, which are non-intersecting and $\gamma_{N,k}(0)=x_{N,k}$.\\

We are interested in the limit $N\to\infty$ of the growing curves, i.e. the convergence of $\gamma_{N,1}[0,t]\cup ...\cup \gamma_{N,N}[0,t]$ to a  hull $K_t$. To be more precise, we would like to answer the following question once that some $t>0$ has been fixed: under which conditions does the sequence $\Ha\setminus (\gamma_{N,1}[0,t]\cup ...\cup \gamma_{N,N}[0,t])$ of domains converge to a (simply connected) domain $\Ha\setminus K_t$ with respect to kernel convergence (check Figure~\ref{fig1})?\\
According to Carath\'{e}odory's Kernel Theorem (Theorem 1.8 in~\cite{Pom:1975}), the above question is equivalent to asking for locally uniform convergence of the mappings $g_{N,t}$ to a conformal mapping $g_t:\Ha\setminus K_t\to\Ha.$ Also, we would like to be able to describe $g_t$ again by a Loewner equation.\\

Let $\delta_x$ be the point measure in $x$ with mass 1 and  define
\begin{equation}\label{babygeorge}
\meu_{N,t} \, = \, \sum_{k=1}^N \lambda_{N,k} \delta_{V_{N,k}(t)}.
\end{equation}

Then equation \eqref{multi2} can be written as 
\begin{equation}\label{Poma}
\frac{d}{dt}g_{N,t}=\int_{\R}\frac{2}{g_{N,t}-u}\, d\meu_{N,t}(u).
\end{equation}

Assume now that
\begin{equation}\label{assum}
\meu_{N,0} \overset{\mathbf w}{\longrightarrow} \meu \quad \text{as} \quad N\to\infty \;,
\end{equation}
where we denoted with ``$\overset{\mathbf w}{\longrightarrow}$'' the weak convergence and where $\alpha$ is again a probability measure. We wish to know whether the sequence $\{\meu_{N,t}\}_{N\in\N}$ of stochastic measure-valued processes converges. \\
In what follows, we show that, under certain assumptions for $x_{N,k}$ and $\lambda_{N,k}$, this sequence is tight and that each limit process satisfies the same differential equation. 

\begin{figure}[ht]
\rule{0pt}{0pt}
\centering
\includegraphics[width=6.5cm]{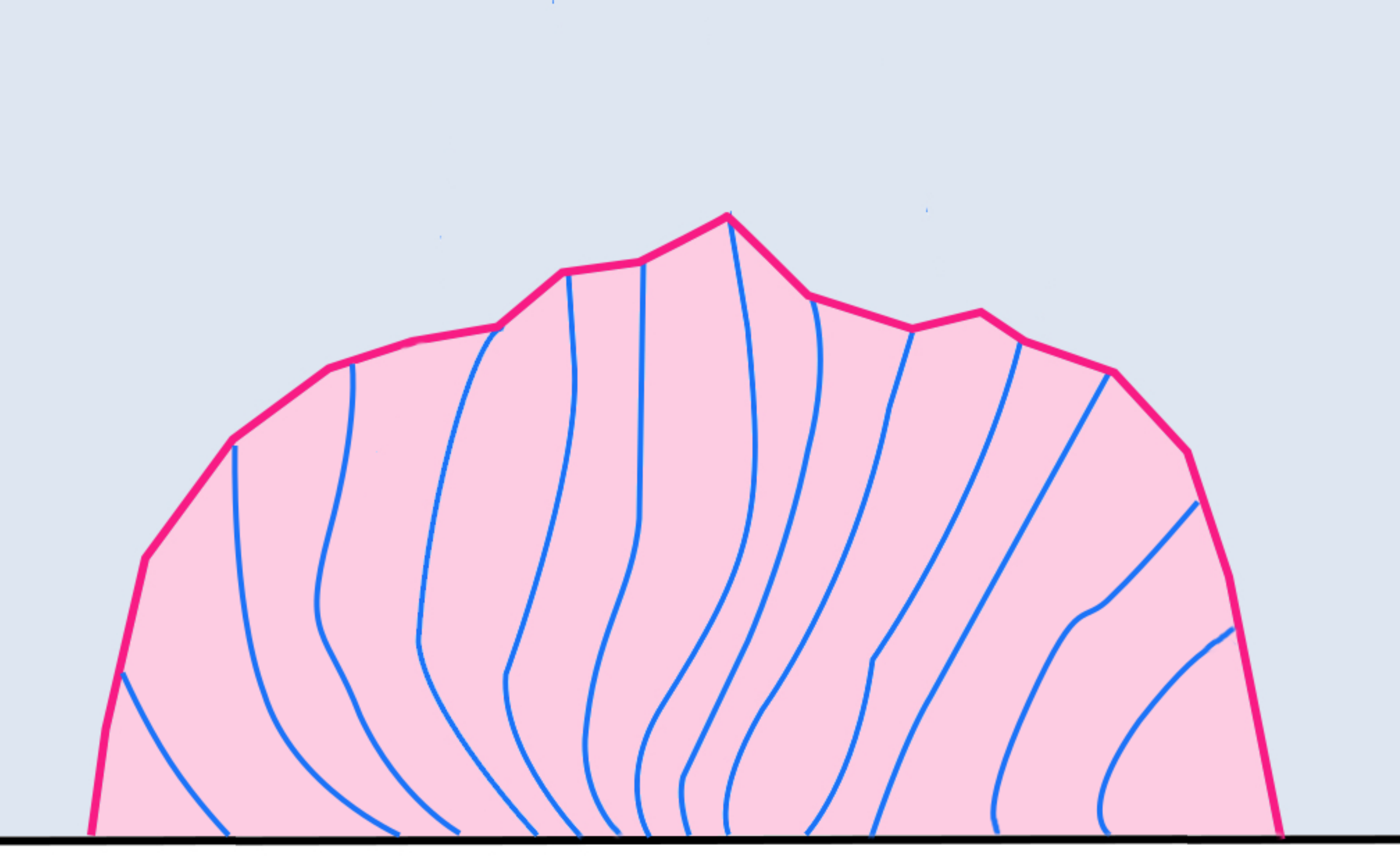}
\caption{Kernel convergence of the complement of slits.}
\label{fig1}
\end{figure}

\begin{definition}
Fix $T>0$ and let $\mathcal{P}(\R)$ be the space of probability measures on $\R$ endowed with the topology of weak convergence
(which is a metric space due to the well-known L\'{e}vy-Prokhorov metric). We denote by $\mathcal{M}(T)=C([0,T], \mathcal{P}(\R))$ the space of all continuous measure-valued processes on $[0,T]$ endowed with the topology of uniform convergence.
\end{definition}
For every $N\in\N,$ $\meu_{N,t}$ can be regarded as a random element from $\mathcal{M}(T)$.

\subsection{Tightness}
 We call a sequence $\{\mu_N\}_{N\in\N}$ of random elements from $C([0,T], \R)$ (or $\mathcal{M}(T)$) \emph{tight} if there exists a subsequence which converges in distribution. By Prohorov's Theorem (\cite[Section 5]{MR1700749}), this coincides with the usual definition of tightness.\\
We are now going to list certain conditions that guarantee tightness of the sequence $\{\meu_{N,t}\}_{N\in\N}$  defined in \eqref{babygeorge}. 

First of all, we make the following assumption:
\begin{equation}\label{max}\tag{a}
\text{there exists} \; C>0 \; \text{such that for every} \; N\in\N \; \text{it holds} \; \max_{k\in\{1,...,N\}} \lambda_{N,k} \leq \frac{C}{N}.
 \end{equation}

Now,  we introduce the ``empirical distribution'' $$\til_{N,t}=\sum_{k=1}^N \frac{1}{N} \delta_{V_{N,k}(t)}$$ and we let $L_N:[0,1]\to[0,1]$ be defined as $L_N(k/N) = \sum_{j=1}^k \lambda_{N,j}$ for $k=0,\ldots,N$. Next, we extend $L_N$ to the entire unit interval $[0,1]$ by linear interpolation.
Then the family $\{L_N\}_{N\in\N}$ is uniformly bounded by $1$ and equicontinuous by \eqref{max}. The Ascoli--Arzel\`{a} Theorem implies that it is precompact. We will hence assume that the limit exists:
\begin{equation}\label{imp:2}
L_N(x) \to L(x) \; \text{uniformly on} \; [0,1] \; \text{as} \; N\to\infty.\tag{b}
 \end{equation}

Notice that if $F_{N,t}(x)=\meu_{N,t}(-\infty,x]$ and $G_{N,t}(x)=\til_{N,t}(-\infty, x]$ are the cumulative distribution functions, we have that
\begin{equation}
\label{imp:1}
F_{N,t}(x) = L_N(G_{N,t}(x)).
\end{equation}

Finally, the last assumption is rather a technical condition. Namely, we assume that $\til_{N,0}$ converges weakly to a probability measure $\til$ in such a way that there exists a $C^2$-function $\varphi:\R\to [1,\infty)$, with $\varphi', \varphi''$ bounded and $\varphi(x)\to\infty$ for $x\to\pm\infty$ such that
\begin{equation}\label{cond1}
\sup_{N\in\N} \int_\R \varphi(x)d\til_{N,0}(x)< +\infty.\tag{c}
\end{equation}

Let $C^2_b(\R,\C)$ be the space of all twice continuously differentiable functions $f:\R\to \C$  such that $f, f', f''$ are bounded.%

\begin{theorem}\label{thm:1}
Let $T>0$. Then, under the assumptions \eqref{max}, \eqref{imp:2} and \eqref{cond1}, the sequences $\{\til_{N,t}\}_N$ and $\{\meu_{N,t}\}_N$ are tight with respect to $\mathcal{M}(T)$.\\
Moreover, if $\til_{N_k,t}$ is a converging subsequence of $\{\til_{N,t}\}_N$ with limit $\til_t$, then
\begin{enumerate}
\item{$\alpha_{N_k,t}$ converges to the process $\alpha_t$, and for every $t\in[0,T]$ the cumulative distribution function $F_t$ of $\alpha_t$ is given by
\begin{equation}\label{TheQueen}
F_t(x)=L\circ G_t(x)
\end{equation}
where $G_t$ is the cumulative distribution function of $\til_t$;}
\item{$\til_t$ satisfies the (distributional) differential equation
\begin{equation}\label{mck:1}
\left\lbrace
\begin{aligned}
&\frac{d}{dt}\left(\int_\R f(x)\, d\til_t(x)\right) =  2\int_{\R^2} \frac{f'(x)-f'(y)}{x-y} \,d\til_t(x) d\meu_t(y)\\
&\til_0=\til
\end{aligned}
\right.
\end{equation} 
for all $f\in C^2_b(\R, \C)$.}
\end{enumerate}
\end{theorem}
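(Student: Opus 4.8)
\emph{Overview and It\^o identity.} I would treat Theorem~\ref{thm:1} as a mean-field (propagation-of-chaos) limit for the interacting diffusions \eqref{sigma}, the two nonstandard features being that the interaction kernel $1/(V_{N,k}-V_{N,j})$ is singular --- for $\kappa\in[0,4]$ it nevertheless produces a well-posed system of strictly ordered, non-colliding diffusions, which I would import from the multiple-SLE references --- and that the measures $\til_{N,t}$ and $\meu_{N,t}$ are rigidly coupled through the deterministic rescaling $L_N$ recorded in \eqref{imp:1}. The first step is to write down the semimartingale satisfied by $\int f\,d\til_{N,t}$: for $f\in C^2_b(\R,\C)$, apply It\^o's formula to $f(V_{N,k}(t))$, average against $1/N$, and symmetrise the drift double sum under $k\leftrightarrow j$. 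Setting $\Phi_f(x,y):=\frac{f'(x)-f'(y)}{x-y}$ for $x\ne y$ and $\Phi_f(x,x):=f''(x)$, so that $\Phi_f(x,y)=\int_0^1 f''(y+s(x-y))\,ds$ is jointly continuous and bounded by $\|f''\|_\infty$, the pair-swap converts the singular kernel into $\Phi_f$ and yields
\begin{equation}\label{plan-semimart}
\begin{split}
\int_\R f\,d\til_{N,t}\;=\;\int_\R f\,d\til_{N,0}
\;&+\;2\int_0^t\!\iint_{\R^2}\Phi_f(x,y)\,d\til_{N,s}(x)\,d\meu_{N,s}(y)\,ds\\
\;&+\;\tfrac1N\Big(\tfrac{\kappa}{2}-2\Big)\int_0^t\!\int_\R f''\,d\meu_{N,s}\,ds\;+\;M^f_{N,t},
\end{split}
\end{equation}
where $M^f_{N,t}=\frac1N\sum_{k=1}^N\int_0^t f'(V_{N,k}(s))\sqrt{\kappa\lambda_{N,k}}\,dB_{N,k}(s)$ has, using $\sum_k\lambda_{N,k}=1$, quadratic variation at most $\kappa\|f'\|_\infty^2 T/N^2$. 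Note that the integrand in \eqref{plan-semimart} is, by \eqref{imp:1}, a functional of $\til_{N,\cdot}$ and $\meu_{N,\cdot}$ that involves no singularities.

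\emph{Tightness, and conclusion (1).} For the compact-containment condition I would run \eqref{plan-semimart} with $f=\varphi$ (legitimate after a truncation--Fatou step, since $\varphi',\varphi''$ are bounded): from $|\Phi_\varphi|\le\|\varphi''\|_\infty$ the drift rate is $\le 3\|\varphi''\|_\infty$, so \eqref{cond1}, Doob's $L^2$-inequality applied to $M^\varphi_{N,\cdot}$, and Markov's inequality give $\sup_N\mathbb E\big[\sup_{t\le T}\int_\R\varphi\,d\til_{N,t}\big]<\infty$; since $\{\nu:\int\varphi\,d\nu\le M\}$ is compact in $\mathcal P(\R)$ because $\varphi\to\infty$, the paths $\til_{N,\cdot}$ remain, with probability close to $1$ uniformly in $N$, inside a fixed compact subset of $\mathcal M(T)$. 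For the time regularity, for each fixed $f\in C^2_b(\R,\R)$ the drift in \eqref{plan-semimart} is bounded by $3\|f''\|_\infty$ while $\mathbb E[\sup_{t\le T}|M^f_{N,t}|^2]\to 0$, so $\{\int_\R f\,d\til_{N,\cdot}\}_N$ is tight in $C([0,T],\R)$; by a standard criterion for measure-valued processes (Roelly--Coppoletta, or Jakubowski), applied to a countable convergence-determining subfamily, $\{\til_{N,\cdot}\}_N$ is then tight in $\mathcal M(T)$. Next let $\Psi_N\colon\mathcal P(\R)\to\mathcal P(\R)$ post-compose a cumulative distribution function with $L_N$, and $\Psi$ the analogous map built from the (continuous) limit $L$; by \eqref{imp:1}, $\meu_{N,t}=\Psi_N(\til_{N,t})$, the maps $\Psi_N$ are continuous, and the cumulative distribution functions of $\Psi_N(\nu)$ and $\Psi(\nu)$ differ by at most $\|L_N-L\|_\infty$ in supremum norm uniformly in $\nu$, which tends to $0$ by \eqref{max}--\eqref{imp:2}. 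Lifting these maps to paths, the tight family $\{\til_{N,\cdot}\}_N$ is carried onto $\{\meu_{N,\cdot}\}_N$, so the latter is tight; and along any subsequence with $\til_{N_k,\cdot}\to\til_\cdot$ in $\mathcal M(T)$ one obtains $\meu_{N_k,\cdot}\to\meu_\cdot:=\Psi(\til_\cdot)$ in $\mathcal M(T)$, i.e. $F_t=L\circ G_t$ for every $t$, which is conclusion (1) (and $\til_0=\til$ because $\til_{N,0}\to\til$).

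\emph{The limit equation (conclusion (2)).} Continuing with such a subsequence, relabelled $\{N\}$, I realise the joint convergence $(\til_{N,\cdot},\meu_{N,\cdot})\to(\til_\cdot,\meu_\cdot)$ in $\mathcal M(T)^2$ almost surely via the Skorokhod representation theorem. For $f\in C^2_b(\R,\C)$ and $(\nu_\cdot,\rho_\cdot)\in\mathcal M(T)^2$ put
\begin{equation}\label{plan-A}
\mathcal A^f_t(\nu_\cdot,\rho_\cdot):=\int_\R f\,d\nu_t-\int_\R f\,d\nu_0-2\int_0^t\!\iint_{\R^2}\Phi_f(x,y)\,d\nu_s(x)\,d\rho_s(y)\,ds.
\end{equation}
Since $\Phi_f\in C_b(\R^2)$, weak convergence of the product measures $\nu^{(n)}_s\otimes\rho^{(n)}_s$ together with dominated convergence in $s$ makes $\mathcal A^f_t$ a continuous functional on $\mathcal M(T)^2$, so $\mathcal A^f_t(\til_{N,\cdot},\meu_{N,\cdot})\to\mathcal A^f_t(\til_\cdot,\meu_\cdot)$ almost surely. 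On the other hand \eqref{plan-semimart} reads $\mathcal A^f_t(\til_{N,\cdot},\meu_{N,\cdot})=M^f_{N,t}+\LandauO(1/N)$, which tends to $0$ in $L^2$. Hence $\mathcal A^f_t(\til_\cdot,\meu_\cdot)=0$ almost surely, first for every $t$ and every $f$ in a countable family, and then --- by continuity of $t\mapsto\mathcal A^f_t$, by density, and by dominated convergence (extending via smooth truncations from $C^2_c(\R,\C)$ to all of $C^2_b(\R,\C)$) --- almost surely for all $t\in[0,T]$ and all $f\in C^2_b(\R,\C)$; recalling the diagonal convention $\Phi_f(x,x)=f''(x)$, this is precisely the distributional equation \eqref{mck:1}.

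\emph{Main obstacle.} The delicate point is the uniform-in-$N$ propagation of the moment bound in the second step: it works only because the symmetrisation of the first step replaces the a priori singular drift by the bounded kernel $\Phi_\varphi$ and because $\sum_{k\ne j}(\lambda_{N,k}+\lambda_{N,j})=2(N-1)$, so that after dividing by $N$ the rate is independent of $N$ --- this is exactly where the particular form of the coefficients in \eqref{sigma} and the technical hypothesis \eqref{cond1} enter. A second, more structural prerequisite underlying every computation above is the global well-posedness and almost sure non-collision of \eqref{sigma} for $\kappa\in[0,4]$; $\kappa=4$ is the threshold at which the relative coordinate of two neighbouring particles behaves like a Bessel-type process of dimension $\ge2$, and I would take this from the multiple-SLE literature cited above.
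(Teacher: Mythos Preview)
Your proposal is correct and follows essentially the same approach as the paper: apply It\^o's formula to $\int f\,d\til_{N,t}$, symmetrise the singular drift into the bounded kernel $\Phi_f$, reduce tightness of $\{\til_{N,\cdot}\}_N$ in $\mathcal M(T)$ to tightness of the real-valued processes $\{\int f\,d\til_{N,\cdot}\}_N$ (the paper cites Rogers--Shi and G\"artner for this reduction, you invoke a Roelly--Coppoletta/Jakubowski criterion), and then transfer to $\{\meu_{N,\cdot}\}_N$ via the relation $F_{N,t}=L_N\circ G_{N,t}$ together with the uniform convergence $L_N\to L$. Your treatment is in fact more explicit than the paper's in two places: you spell out the map $\Psi_N$ and the uniform estimate $\|L_N\circ G-L\circ G\|_\infty\le\|L_N-L\|_\infty$ to obtain conclusion~(1), and you justify the passage to the limit in \eqref{mck:1} via the Skorokhod representation and continuity of the functional $\mathcal A^f_t$, whereas the paper simply asserts that ``each limit process satisfies equation~\eqref{mck:1}''.
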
%

\begin{remark}
The conditions \eqref{imp:2} and \eqref{cond1} are natural in the sense that we should assume convergence of the initial conditions $x_{N,k}$ and the coefficients $\lambda_{N,k}$, which are encoded in the functions $L_N.$ If some $\lambda_{N,k}$ do not converge to $0$ as $N\to\infty,$ then some part of the measure $\meu_{N,t}$ may escape to infinity as $N\to\infty$, see Example \ref{Johnny}.
\end{remark}
\begin{proof} 
To begin with, we notice that proving tightness of $\{\til_{N,t}\}_N$ can be reduced to proving  tightness of stochastic \emph{real-valued} processes (see \cite[Section 3]{MR1217451} and also \cite[Section 1.3]{MR968996}).\\
Thus, the sequence $\{\til_{N,t}\}_N$ is tight if 
$$\left\{\int_\R \varphi(x) d\til_{N,t}(x)\right\}_{N} \quad \text{and} \quad \left\{\int_\R f(x) d\til_{N,t}(x)\right\}_{N}$$ are tight sequences (with respect to the space $C([0,T], \R)$ with uniform convergence) for all $f\in C^2_b(\R,\C).$\\ 
Now, let $f\in C^2_b(\R,\C)$; It\={o}'s formula gives

\begin{eqnarray*}
 && d\left(\int_\R f(x) \, d\til_{N,t}(x)\right) = d\left(\sum_{k=1}^N \frac{1}{N} f(V_{N,k}(t)) \right) \\
&=& \sum_{k=1}^N  \frac{1}{N}\left( f'(V_{N,k}(t))\sum_{j\not=k}\frac{2(\lambda_{N,k}+\lambda_{N,j})}{V_{N,k}(t)-V_{N,j}(t)}\,dt + 
\frac{\kappa \lambda_{N,k}}{2} f''(V_{N,k}(t)) \,dt +  f'(V_{N,k}(t))\sqrt{\kappa \lambda_{N,k}}\,dB_{N,k}(t) \right) \\
&=& \sum_{k=1}^N  \frac{1}{N} f'(V_{N,k}(t))\sum_{j\not=k}\frac{2\lambda_{N,j}}{V_{N,k}(t)-V_{N,j}(t)} \,dt  + 
\sum_{k=1}^N  \frac{\lambda_{N,k}}{N} f'(V_{N,k}(t))\sum_{j\not=k}\frac{2}{V_{N,k}(t)-V_{N,j}(t)} \,dt + \dots  \\
&=& \int_{}\int_{x\not=y} \frac{2f'(x)}{x-y} \,
d\til_{N,t}(x) d\meu_{N,t}(y) \,dt+ \int_{}\int_{x\not=y} \frac{2f'(x)}{x-y} \,
d\meu_{N,t}(x) d\til_{N,t}(y) \, dt + \dots\\
&=& 2\int_{}\int_{x\not=y} \frac{f'(x)-f'(y)}{x-y} \,
d\til_{N,t}(x) d\meu_{N,t}(y)  \,dt + \ldots\\
&=& \underbrace{2\int_{}\int_{\R^2} \frac{f'(x)-f'(y)}{x-y} \,
d\til_{N,t}(x) d\meu_{N,t}(y)}_{=:A_N(t)}  \,dt +  \underbrace{2\sum_{k=1}^N \frac{\lambda_{N,k}}{N}f''(V_{N,k}(t))}_{=:B_N(t)} \,dt \\
&+&\underbrace{\frac{\kappa}{2} \sum_{k=1}^N \frac{\lambda_{N,k}}{N} f''(V_{N,k}(t))}_{=:C_N(t)} \,dt +  \underbrace{\sum_{k=1}^N  \frac1{N} f'(V_k(t))\sqrt{\kappa \lambda_{N,k}}}_{=:D_N(t)}\,dB_{N,k}(t).
\end{eqnarray*} 

As $f'$ and $f''$ are bounded and $\lambda_{N,k}\leq C/N$, it is easy to see that $A_N(t)$ is uniformly bounded and that $B_N(t), C_N(t), D_N(t)$ all converge to $0$ as $N\to \infty.$  By the ``stochastic Ascoli--Arzel\`{a} Theorem'' (\cite[Thm. 7.3]{MR1700749}), we conclude that $\left\{\int_\R f(x) d\til_{N,t}(x)\right\}_{N\in\N}$ is tight. Plus, in view of the boundedness of both $\varphi'$ and $\varphi''$, thanks to assumption \eqref{cond1}, the same reasoning also implies tightness of the sequence $\left\{\int_\R \varphi(x) d\til_{N,t}(x)\right\}_{N\in\N}.$ Hence, $\til_{N,t}$ is tight and each limit process satisfies equation \eqref{mck:1}. \\
Finally, it follows from \eqref{imp:1} and assumption \eqref{imp:2} that the subsequence $\meu_{N_k,t}$ converges provided the convergence of $\til_{N_k,t}$. In particular, it follows that relation \eqref{TheQueen} holds for the limit processes.
\end{proof}
Now we can easily show that if $\til_{N_k,t}$ is a converging subsequence, then $g_{N_k,t}$ converges as well.\\
 
First, let $\mathcal{C}$ be the set of all $M(z)=\int_\R \frac{2}{z-u} d\beta(u)$, where $\beta$ is a probability measure. So $M$ is 2 times the Cauchy transform (or Stieltjes transform) of $\beta.$ The measure $\beta$ can be recovered from $M$ by the Stieltjes-Perron inversion formula (see \cite[Theorem F.2]{MR2953553}). Denote its distribution function by $F(x).$ Then $L\circ F(x)$ is also a distribution function,  which corresponds to a measure $\hat{\beta}.$ In this way, we obtain a map $\mathcal{L}:\mathcal{C}\to \mathcal{C}$ defined as
$$\int_\R \frac{2}{z-u} d\beta(u) \mapsto \int_\R \frac{2}{z-u} d\hat{\beta}(u).$$

The limit of the Loewner equation can now be described as follows.
\begin{corollary}\label{thm:2}
Let $\til_{N_k,t}$ be a converging subsequence with limit $\mu_t$. Then $g_{N_k,t}$ converges in distribution with respect to locally uniform convergence to $g_t,$ the solution of the Loewner equation
\begin{equation}\label{Loe:1}
\frac{d}{dt} g_t = (\mathcal{L} \circ M_t)(g_t),
\end{equation}
where $M_t=\int_\R \frac{2}{z-u}d\mu_t(u)$ solves the (abstract) differential equation 
\begin{equation}\label{abstractBurgers}
\left\lbrace
\begin{aligned}
&\frac{\partial}{\partial t}M_t =  - \frac{\partial}{\partial z}M_t \cdot (\mathcal{L} \circ M_t) -M_t  \cdot \frac{\partial}{\partial z}(\mathcal{L} \circ M_t),\\
&M_0(z) = \int_\R \frac{2}{z-u} d\til(u).
\end{aligned}
\right.\quad.
\end{equation}

\end{corollary}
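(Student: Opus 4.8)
The plan is to transfer the distributional ODE \eqref{mck:1} for $\mu_t$ into an equation for its Cauchy transform $M_t$, and then feed that into the Loewner equation \eqref{Poma} in the limit. First I would note that, by Theorem \ref{thm:1}, along the chosen subsequence $\til_{N_k,t}\to\til_t$ in $\mathcal{M}(T)$ and $\meu_{N_k,t}\to\meu_t$ with $F_t=L\circ G_t$; in terms of Cauchy transforms this says precisely $M_t=(\mathcal{L}\circ\widetilde{M}_t)$ is not quite right — rather, writing $\widetilde M_t(z)=\int 2/(z-u)\,d\til_t(u)$ for the transform of the empirical limit and $M_t$ for that of $\meu_t$, the relation $F_t=L\circ G_t$ is exactly the statement $M_t=\mathcal{L}(\widetilde M_t)$ read through the map $\mathcal{L}$ defined just before the corollary. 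So the pair $(\til_t,\meu_t)$ is encoded by the single object $\widetilde M_t$ together with the fixed nonlinearity $\mathcal{L}$.

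Next I would derive \eqref{abstractBurgers}. Apply \eqref{mck:1} with the test functions $f(x)=f_z(x):=2/(z-x)$ for $z\in\Ha$ fixed (these are in $C^2_b(\R,\C)$ after a harmless localization/truncation argument, or one argues by approximation since all integrals converge absolutely for $\Im z>0$). Then $\int f_z\,d\til_t = \widetilde M_t(z)$ (up to the sign convention built into $\mathcal C$), $f_z'(x)=2/(z-x)^2$, and the standard divided-difference identity
\begin{equation*}
\frac{f_z'(x)-f_z'(y)}{x-y} = \frac{1}{(z-x)^2(z-y)} + \frac{1}{(z-x)(z-y)^2}
\end{equation*}
lets one rewrite the right-hand side of \eqref{mck:1} as a product of one-variable Cauchy transforms and their $z$-derivatives. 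Concretely, the double integral becomes $-\tfrac12\big(\partial_z\widetilde M_t(z)\big)\cdot M_t(z) - \tfrac12\,\widetilde M_t(z)\cdot\partial_z M_t(z)$ up to the normalizing constants, which after bookkeeping is exactly the right-hand side of \eqref{abstractBurgers} with $M_t$ playing the role of $\mathcal L(\widetilde M_t)$. Since $M_t=\mathcal{L}(\widetilde M_t)$ and the corollary is stated in terms of $M_t$ only, one then re-expresses $\widetilde M_t$ as a function of $M_t$ via $\mathcal{L}^{-1}$; alternatively, and more cleanly, one observes the symmetry of the argument and records the equation directly for $M_t$, using that $\til_t$-integration against $f_z'$ produces $\partial_z$ of the $\til$-transform while $\meu_t$-integration produces the factor $M_t$. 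The initial condition $M_0(z)=\int 2/(z-u)\,d\til(u)$ is immediate from $\til_0=\til$.

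For the Loewner equation itself, I would pass to the limit in \eqref{Poma}. For $z\in\Ha$ with $\Im(g_{N_k,t}(z))$ bounded below on $[0,T]$, the integrand $u\mapsto 2/(g_{N_k,t}(z)-u)$ is bounded and continuous, so weak convergence $\meu_{N_k,t}\to\meu_t$ together with locally uniform convergence of $g_{N_k,t}$ (which follows from the Loewner equation, equicontinuity in $t$ coming from the uniform bound on the vector field, and Carath\'eodory kernel convergence as recalled in the text) gives
\begin{equation*}
\frac{d}{dt}g_t(z) = \int_\R \frac{2}{g_t(z)-u}\,d\meu_t(u) = M_t(g_t(z)) = \big(\mathcal{L}\circ\widetilde M_t\big)(g_t(z)),
\end{equation*}
which is \eqref{Loe:1}. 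The tightness of $\{g_{N,t}\}$ and existence of limit points is again via the stochastic Ascoli--Arzel\`a theorem applied to $t\mapsto g_{N,t}(z)$ for a countable dense set of $z$, as in the proof of Theorem \ref{thm:1}.

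The main obstacle is \emph{justifying the use of the unbounded test functions} $f_z$ and the interchange of the time-derivative with the limit: $f_z\notin C^2_b(\R,\C)$ in the strict sense required by \eqref{mck:1}, so one must either (i) show \eqref{mck:1} extends to all bounded continuous $f$ with bounded continuous $f',f''$ that decay at infinity — which needs the tightness/tail control furnished precisely by assumption \eqref{cond1} and the function $\varphi$, ensuring no mass escapes to infinity — or (ii) truncate $f_z$ outside a large ball, apply \eqref{mck:1}, and let the truncation radius tend to infinity, controlling the error by $\int_{|x|>R}\varphi\,d\til_t \to 0$ uniformly in $t\le T$. A secondary technical point is that one must know $t\mapsto g_t(z)$ inherits enough regularity (absolute continuity) for \eqref{Loe:1} to make sense pointwise in $t$; this follows because the limiting vector field $u\mapsto 2/(g_t(z)-u)$ is bounded on compact $z$-sets away from the real axis, so $g_t(z)$ is Lipschitz in $t$ there, and the equation holds for a.e.\ $t$. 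Everything else is routine once these two integrability issues are dispatched.
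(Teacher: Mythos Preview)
Your derivation of \eqref{abstractBurgers} from \eqref{mck:1} is essentially the paper's argument: plug in $f_z(x)=2/(z-x)$, expand the divided difference, and recognize the product of Cauchy transforms and their $z$-derivatives. However, what you flag as the ``main obstacle'' is a phantom. For fixed $z\in\Ha$ the function $f_z$ \emph{is} in $C^2_b(\R,\C)$ outright: $|f_z(x)|\le 2/\Im z$, $|f_z'(x)|\le 2/(\Im z)^2$, $|f_z''(x)|\le 4/(\Im z)^3$. No truncation, approximation, or appeal to \eqref{cond1} is needed here; the paper simply states $f\in C^2_b(\R,\C)$ and proceeds. Your proposed tail-control argument is unnecessary work. (You also have the roles of $M_t$ and $\mathcal L\circ M_t$ slightly tangled: in the paper's notation $M_t$ is the transform of the empirical limit $\mu_t$, and $\mathcal L\circ M_t$ that of $\alpha_t$; your correction mid-paragraph lands in the right place but the exposition is muddled.)

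For the convergence $g_{N_k,t}\to g_t$ you take a genuinely different route. The paper does not argue via equicontinuity of $t\mapsto g_{N,t}(z)$ and a second application of stochastic Ascoli--Arzel\`a; instead it invokes a control-theoretic black box (Theorem~\ref{control}, cited from the literature) asserting that the solution map $\{\beta_s\}\mapsto h_t$ of the Loewner ODE is continuous from measure-valued driving terms to conformal maps. Combined with two applications of the Continuous Mapping Theorem, this transports the convergence in distribution of $\alpha_{N_k,s}$ directly to convergence in distribution of $g_{N_k,t}$. Your hands-on approach is morally plausible, but as written it is a gap: you assert locally uniform convergence of $g_{N_k,t}$ as an input to the weak-convergence argument (``together with locally uniform convergence of $g_{N_k,t}$, which follows from\ldots''), whereas that convergence is precisely what is to be proved, and you do not explain how to upgrade tightness of $g_{N,t}(z)$ for countably many $z$ to convergence in distribution of the full map along the \emph{given} subsequence $N_k$, nor how the randomness is handled. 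The paper's route via Theorem~\ref{control} avoids all of this by packaging the ODE stability as a single continuous map.
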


\begin{remark}\label{Napoli} The convergence of $\meu_{N,t}$ and $g_{N,t}$ would follow immediately if we knew that equation \eqref{abstractBurgers} (or, equivalently, \eqref{mck:1}) had a unique solution. If $\lambda_{N,k}=\frac1{N}$, then \eqref{abstractBurgers} is a usual PDE and uniqueness can be shown easily (see Section \ref{Princess}).
\end{remark}

In order to prove the above corollary, we will need the following control-theoretic result.

\begin{theorem}\label{control} Fix some $t>0$. Let $\lambda$ be the Lebesgue measure on $[0,t]$ and let $\mathcal{N}(t)$ be the space of all finite measures on $\R\times [0,t]$ endowed with the topology of weak convergence. \\
Let $\{\beta_{N,s}\}_{N\in \N}$ be a sequence of processes from $\mathcal{M}(t)$ and assume  $\beta_{N,s} \times \lambda \in \mathcal{N}(t)$ converges weakly to $\beta_s \times \lambda \in \mathcal{N}(t)$ as $N\to\infty$. Denote with $h_{N,s},$ $s\in[0,t]$, the solution to the Loewner equation
$$ \frac{d}{ds} h_{N,s}(z) = \int_\R \frac{2}{h_{N,s}(z)-u} \, d\beta_{N,s}(u), \quad h_{N,0}(z)=z.$$
Then $h_{N,t}$ converges locally uniformly to $h_t,$ where $h_s$, $s\in[0,t],$ is the solution to
$$ \frac{d}{ds} h_s(z) = \int_\R \frac{2}{h_s(z)-u} \, d\beta_s(u), \quad h_0(z)=z.$$
\end{theorem}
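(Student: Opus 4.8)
The plan is to establish convergence of the Loewner flows $h_{N,t}\to h_t$ via a compactness-plus-uniqueness argument applied to the ODE in the $z$-variable, treating $z$ as a parameter ranging over a fixed compact subset of $\Ha$. First I would fix a compact set $E\subset\Ha$ bounded away from $\R$, say $E\subset\{z : \Im z\geq \eta,\ |z|\leq R\}$, and observe that along the Loewner flow $\Im h_{N,s}(z)$ is nonincreasing in $s$ (since $\frac{d}{ds}\Im h_{N,s}(z) = -\int_\R \frac{2\,\Im h_{N,s}(z)}{|h_{N,s}(z)-u|^2}\,d\beta_{N,s}(u)\leq 0$ when the total mass of $\beta_{N,s}$ is finite and nonnegative), so that $\Im h_{N,s}(z)\geq$ something that decays at most like a fixed rate; more precisely, because the driving measures have uniformly bounded total mass (the weak convergence of $\beta_{N,s}\times\lambda$ forces $\sup_N \int_0^t \beta_{N,s}(\R)\,ds<\infty$, and by the Loewner interpretation $\beta_{N,s}(\R)$ is constant in $s$, hence uniformly bounded), one gets a uniform lower bound $\Im h_{N,s}(z)\geq c(\eta,t)>0$ for all $s\in[0,t]$, $z\in E$, $N\in\N$. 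This gives a uniform bound $\left|\int_\R\frac{2}{h_{N,s}(z)-u}\,d\beta_{N,s}(u)\right|\leq 2\beta_{N,s}(\R)/c(\eta,t)$ on the vector field, hence equicontinuity in $s$ and a uniform bound of the family $\{s\mapsto h_{N,s}(z)\}$, uniformly in $z\in E$. By Arzel\`a--Ascoli (in $s$, with values in the space of holomorphic maps on the interior of $E$, using Montel for the $z$-dependence) the family $\{h_{N,\cdot}\}$ is precompact for locally uniform convergence on $[0,t]\times\Ha$; let $h_\cdot$ be any subsequential limit.

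The second step is to identify the limit as the Loewner flow driven by $\beta_s$. Writing the integral form
\[
h_{N,s}(z) = z + \int_0^s \int_\R \frac{2}{h_{N,r}(z)-u}\,d\beta_{N,r}(u)\,dr = z + \int_{\R\times[0,s]} \frac{2}{h_{N,r}(z)-u}\,d(\beta_{N,r}\times\lambda)(u,r),
\]
I would pass to the limit in the last expression. For fixed $z\in E$ and fixed $s$, the integrand $(u,r)\mapsto \frac{2}{h_{N,r}(z)-u}\mathbf{1}_{[0,s]}(r)$ is \emph{not} a fixed continuous bounded function of $(u,r)$ — it depends on $N$ through $h_{N,r}$ — so the weak convergence of $\beta_{N,r}\times\lambda$ does not immediately apply. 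The standard fix is: (i) by the uniform bound $\Im h_{N,r}(z)\geq c>0$ and the locally uniform convergence $h_{N,r}(z)\to h_r(z)$, the functions $\phi_N(u,r):=\frac{2}{h_{N,r}(z)-u}\mathbf{1}_{[0,s]}(r)$ converge to $\phi(u,r):=\frac{2}{h_r(z)-u}\mathbf{1}_{[0,s]}(r)$ uniformly on $\R\times[0,t]$ (the $u$-dependence is uniformly Lipschitz/bounded because of the imaginary-part lower bound, uniformly in $u$); (ii) combine $\sup_{(u,r)}|\phi_N-\phi|\to 0$ with the weak convergence of the uniformly-bounded-mass measures $\beta_{N,r}\times\lambda$ applied to the \emph{fixed} function $\phi$ (which is continuous in $r$ except at $r=s$, a $\lambda$-null set, so one may also approximate $\mathbf{1}_{[0,s]}$ from above and below by continuous functions to be safe). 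This yields $h_s(z) = z + \int_{\R\times[0,s]}\frac{2}{h_r(z)-u}\,d(\beta_r\times\lambda)(u,r)$, i.e. $h_\cdot$ solves the limiting Loewner equation.

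The third step is uniqueness of the limiting equation: given the measure-valued process $s\mapsto\beta_s$ with $\int_0^t\beta_s(\R)\,ds<\infty$ and the same imaginary-part lower bound argument applied to any two solutions, the vector field $z\mapsto \int_\R\frac{2}{z-u}\,d\beta_s(u)$ is Lipschitz in $z$ on the region $\{\Im z\geq c\}$ uniformly in $s$, with Gr\"onwall giving uniqueness of $s\mapsto h_s(z)$ for each fixed $z$; standard Loewner theory (cf.\ \cite{Lawler:2005}) gives that $h_t$ is then the conformal map onto $\Ha$ with hydrodynamic normalization. Since every subsequential limit equals this unique $h_\cdot$, the full sequence converges. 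I expect the main obstacle to be step two, the passage to the limit in the integral against measures that converge only weakly while the integrand itself varies with $N$; this is exactly why the hypothesis is phrased as joint weak convergence of the space-time measures $\beta_{N,s}\times\lambda$ rather than, say, pointwise-in-$s$ weak convergence, and the key quantitative input making it work is the uniform positive lower bound on $\Im h_{N,s}(z)$ on compacta, which decouples the $N$-dependence of the integrand from the measure.
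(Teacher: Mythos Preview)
The paper itself does not give a proof of this theorem: it simply cites \cite[Proposition~1]{MR2919205} and \cite[Theorem~1.1]{quantum}, both written for the radial equation, and remarks that the arguments adapt. Your compactness--identification--uniqueness scheme is exactly the route taken in those references, so in spirit you are reproducing the cited proof rather than diverging from it.

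There is, however, a genuine gap in your Step~1. From
\[
\frac{d}{ds}\bigl(\Im h_{N,s}(z)\bigr)^2 \;=\; -\int_\R \frac{4\,(\Im h_{N,s}(z))^2}{|h_{N,s}(z)-u|^2}\,d\beta_{N,s}(u) \;\geq\; -4\,\beta_{N,s}(\R)
\]
one obtains only $(\Im h_{N,s}(z))^2 \geq \eta^2 - 4s$ (unit total mass, since $\beta_{N,s}\in\mathcal{P}(\R)$), and this stays positive on $[0,t]$ precisely when $\eta > 2\sqrt{t}$. For compacta $E$ meeting the strip $\{0<\Im z \leq 2\sqrt{t}\}$ the trajectory may actually be swallowed before time $t$ --- already a single vertical slit of half-plane capacity $2t$ has height $2\sqrt{t}$ --- so your asserted uniform bound $\Im h_{N,s}(z)\geq c(\eta,t)>0$ is false for small $\eta$, and Steps~2 and~3, which invoke this bound repeatedly, do not go through on such $E$.

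The repair is standard and worth making explicit. Run your argument verbatim on compacta contained in $\{\Im z > 2\sqrt{t}\}$, where the bound is valid; this yields locally uniform convergence $h_{N,t}\to h_t$ on that region. Then upgrade to locally uniform convergence on all of $\Ha\setminus K_t$ either by passing to the inverse maps $f_{N,t}=h_{N,t}^{-1}$ (univalent self-maps of $\Ha$ with common hydrodynamic normalization, hence a normal family on the whole of $\Ha$, whose limits are pinned down by the already-established convergence near $\infty$), or equivalently by Carath\'eodory's kernel theorem. In the radial references the paper cites, the same issue arises and is handled by the analogous device: prove convergence on a disc about the interior fixed point and then extend.
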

 
A proof of the above theorem can be found in \cite[Proposition 1]{MR2919205} or \cite[Theorem 1.1]{quantum}. Notice that even though both results consider the radial Loewner equation, the proofs can be easily adapted to the chordal case.

\begin{proof}[Proof of Corollary \ref{thm:2}]
For $z\in\Ha$, let $f(x)=\frac{2}{z-x}.$ Then $f\in C^2_b(\R,\C).$ Define now $M_t(z)=\int_\R f(x) \,d\til_t(x)$; then $(\mathcal{L} \circ M_t)(z)=\int_\R f(x) \,d\meu_t(x)$, where $\meu_t$ is the limit of $\meu_{N_k,t},$ and Theorem \ref{thm:1} implies
\begin{eqnarray*}
\frac{\partial }{\partial t}M_t(z) &=&  4\int_{\R^2} \frac{\frac{1}{(z-x)^2}-\frac1{(z-y)^2}}{x-y} \,d\til_t(x) d\meu_t(y) 
=  4\int_{\R^2} \frac{2z-x-y}{(z-x)^2(z-y)^2} \,d\til_t(x) d\meu_t(y)\\
&=&\int_{\R^2} \frac{2}{(z-x)^2}  \frac{2}{(z-y)} +  \frac{2}{(z-x)} \frac{2}{(z-y)^2} \,d\til_t(x) d\meu_t(y)\\
&=&  - \frac{\partial}{\partial z}M_t \cdot (\mathcal{L} \circ M_t) -M_t  \cdot \frac{\partial}{\partial z}(\mathcal{L} \circ M_t).
\end{eqnarray*}

Furthermore, let $g_t$ be the solution to 
$$\frac{d}{dt}g_t = (\mathcal{L}\circ M_t)(g_t), \quad g_0(z)=z.$$
Fix some $t> 0$. 
The canonical mapping $\mathcal{M}(t) \ni \meu_s \mapsto \meu_s \times \lambda \in \mathcal{N}(t)$ is continuous. It follows from the Continuous Mapping Theorem (see \cite{MR1700749}, p. 20) that $\meu_{N_k,s} \times \lambda$ converges in distribution with respect to weak convergence to $\meu_s \times \lambda$. \\ 
Hence, Theorem \ref{control} and again the Continuous Mapping Theorem imply that $g_{N_k,t}$, which is the solution to \eqref{Poma},  converges in distribution to $g_t$ with respect to locally uniform convergence.
\end{proof}

\subsection{The simultaneous case}\label{Princess}

In the case $\lambda_{N,k} = \frac1{N}$ for all $k$, which we call the \emph{simultaneous} case, equation \eqref{sigma} becomes
\begin{equation}\label{sigma_simul}
 dV_{N,k} = \sum_{j\not=k}\frac{4/N}{V_{N,k}-V_{N,j}}dt+\sqrt{\kappa/N}dB_{N,k},
\end{equation}
a process that is quite similar to a Dyson Brownian motion.\\
Note that in such a case $\til_{N,t}=\meu_{N,t}$ and $\mathcal{L}$ is the identity map. If $\meu_t$ is the limit of a converging subsequence of $\{\meu_{N,t}\}_{N}$ and $M_t(z) = \int_\R \frac{2}{z-u}d\meu_t(u),$ then $M_t$ satisfies the complex Burgers equation  
\begin{equation}\label{burgers}
\left\lbrace
\begin{aligned}
&\frac{\partial}{\partial t} M_t = -2M_t\cdot \frac{\partial}{\partial z}M_t(z)\\
&M_0(z) = \int_\R \frac{2}{z-u}\,d\meu_0(u)
\end{aligned}
\right.\quad,
\end{equation}
and the the limit of $g_{N_k,t}$ satisfies 
\begin{equation}\label{Brezel} \frac{d}{dt}g_t = M_t(g_t), \quad g_0(z)=z. \end{equation}

If we put $f_t=g_t^{-1},$ we obtain the Loewner PDE mentioned in Section~\ref{introduction}:
$$\frac{\partial f_t(z)}{\partial t} = - \frac{\partial f_t(z)}{\partial z}\cdot M_t(z).$$

\begin{theorem}\label{LadyDi} Under the assumptions of Theorem \ref{thm:1} with $\lambda_{N,k}=\frac1{N},$ the sequences $\meu_{N,t}$ and $g_{N,t}$ converge in distribution as $N\to\infty.$
\end{theorem}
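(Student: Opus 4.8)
The plan is to reduce the full convergence statement to a uniqueness result for the limiting equation, since Theorem~\ref{thm:1} already provides tightness and characterizes every subsequential limit as a solution of \eqref{mck:1} with the given initial datum $\til$. In the simultaneous case $\lambda_{N,k}=\frac1N$ we have $\til_{N,t}=\meu_{N,t}$, the map $\mathcal L$ is the identity, and \eqref{mck:1} becomes the weak formulation of the complex Burgers equation \eqref{burgers} for $M_t(z)=\int_\R\frac{2}{z-u}\,d\meu_t(u)$. So it suffices to show that \eqref{burgers} has at most one solution in the relevant class (measure-valued $t\mapsto\meu_t$ with the moment bound coming from \eqref{cond1}); then every subsequential limit coincides, and tightness upgrades to convergence in distribution of $\meu_{N,t}$, and via Corollary~\ref{thm:2} (or directly Theorem~\ref{control}) also of $g_{N,t}$.

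For the uniqueness of \eqref{burgers} I would use the classical method of characteristics for the inviscid complex Burgers equation. Fix $z\in\Ha$; the characteristic curves $t\mapsto z(t)$ solve $\dot z(t)=2M_t(z(t))$, and along them $M$ is constant, so $M_t(z(t))=M_0(z(0))$ and hence $z(t)=z(0)+2t\,M_0(z(0))$. This gives the implicit relation $M_t(z)=M_0\big(z-2t\,M_t(z)\big)$, equivalently $w\mapsto w+2t\,M_0(w)$ maps the value $z(0)$ to $z$. Since $M_0$ is (twice) the Cauchy transform of a probability measure, $z\mapsto z+2tM_0(z)$ is a holomorphic self-map considerations on $\Ha$ show it is injective for each fixed $t$ (its imaginary part is strictly increasing in $\Im z$ because $\Im M_0$ has the right sign), so the implicit relation determines $M_t$ uniquely on $\Ha$ from $M_0$. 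Because a measure on $\R$ is recovered from its Cauchy transform by Stieltjes--Perron inversion, $\meu_t$ is uniquely determined. I would phrase this as: any two solutions $M_t,\tilde M_t$ of \eqref{burgers} with the same initial data satisfy the same implicit equation and, by the injectivity just noted, must agree; the only subtlety is checking that the weak formulation \eqref{mck:1} genuinely forces the characteristic identity — this is done by testing against $f(x)=\frac{2}{z-x}$ as in the proof of Corollary~\ref{thm:2}, which yields $\partial_t M_t=-2M_t\partial_zM_t$ in a classical sense on $\Ha$ (the integrand is smooth there), after which the ODE-in-$t$ along characteristics is rigorous.

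The remaining step is to transfer uniqueness at the level of $\meu_t$ to the statement about convergence in distribution. Tightness from Theorem~\ref{thm:1} gives, along any subsequence, a further subsequence converging in distribution in $\mathcal M(T)$ to some limit $\meu_\cdot$; by Theorem~\ref{thm:1}(2) this limit is almost surely a solution of \eqref{burgers} with $\meu_0=\til$ deterministic, hence by the uniqueness above it equals the deterministic solution. Since every subsequence has a further subsequence converging to the same deterministic limit, the whole sequence $\meu_{N,t}$ converges in distribution (indeed in probability, the limit being deterministic). Finally, for $g_{N,t}$ I would invoke Corollary~\ref{thm:2}: the now-convergent $\meu_{N,t}$ feeds into Theorem~\ref{control} (after the product-with-Lebesgue-measure and Continuous Mapping Theorem steps already used there), giving locally uniform convergence of $g_{N,t}$ to the solution of \eqref{Brezel}.

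\textbf{Main obstacle.} I expect the crux to be pinning down the precise function class in which \eqref{burgers} has a unique solution and verifying that subsequential limits live in it — in particular that the moment control \eqref{cond1} is preserved in the limit (no mass escapes to infinity), so that $M_t$ really is the Cauchy transform of a probability measure and the Stieltjes--Perron inversion applies; the method-of-characteristics computation itself is routine once one is allowed to work pointwise in $z\in\Ha$ where everything is analytic.
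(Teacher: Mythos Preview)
Your overall strategy matches the paper's: reduce to uniqueness of \eqref{burgers}, then conclude via tightness and Corollary~\ref{thm:2}. The packaging of the uniqueness argument differs. The paper passes to $F_t:=1/M_t$, which is a holomorphic self-map of $\Ha$, invokes a known result (Bercovici--Voiculescu) that each $F_t$ is univalent on a truncated cone $\Gamma_{\alpha,\beta}=\{\Im z>\beta,\ \Im z>\alpha|\Re z|\}$, and then computes that the inverse $V_t:=F_t^{-1}$ satisfies the trivial ODE $\partial_t V_t(z)=2/z$ with $V_0=(1/M_0)^{-1}$; uniqueness of $V_t$, hence of $M_t$, is then immediate. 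Your method of characteristics is the same idea unwrapped: along $\dot z=2M_t(z)$ one has $M$ constant, so $z(t)=z(0)+2tM_0(z(0))$, which is exactly the statement $V_t(c)=V_0(c)+2t/c$ after the substitution $c=1/M_0(z(0))$.

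The one point that needs tightening is your injectivity claim for $w\mapsto w+2tM_0(w)$. The heuristic ``imaginary part is strictly increasing in $\Im z$'' is not what you want (and in fact $\Im M_0<0$ on $\Ha$, so the map need not send $\Ha$ into $\Ha$, and it can have critical points near $\supp\mu$); global injectivity on all of $\Ha$ generally fails. What saves the argument is precisely the restriction to a truncated cone $\Gamma_{\alpha_0,\beta_0}$ on which $F_t$ is univalent for all $t\in[0,T]$ --- this is the content of the Bercovici--Voiculescu proposition the paper cites, and it is enough because a holomorphic function on $\Ha$ is determined by its values on any such cone. If you prefer to stay with your formulation, argue instead that $w\mapsto w+2tM_0(w)$ is univalent on a suitable $\Gamma_{\alpha,\beta}$ (e.g.\ via the derivative estimate $|2tM_0'(w)|<1$ there) and that its image covers a cone on which $M_t$ is then uniquely specified. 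With that fix your proof goes through and is equivalent to the paper's.
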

As already mentioned, this follows as soon as we know that equation \eqref{burgers} has a unique solution, which is shown, e.g., in \cite[Section 4]{MR1217451} or \cite[Section 5]{MR1440140}. We give here another short proof.
\begin{proof}
Let $M_t$ be a solution of \eqref{burgers}. As $M_t$ has no zeros in $\Ha$ we can consider $F_t:=1/M_t$ which satisfies $\frac{\partial}{\partial t} F_t = -2F_t^{-1}\cdot \frac{\partial}{\partial z}F_t$. Next we use the fact that every $F_t$ is univalent in a region 
$\Gamma_{\alpha(t), \beta(t)}$, where  $$\Gamma_{\alpha,\beta}:=\{z\in\Ha \,|\, \Im(z)>\beta, \Im(z)>\alpha|\Re(z)|\}, \alpha, \beta>0,$$ see \cite[Proposition 5.4]{MR1254116}. For $t\in[0,T]$ we find $\alpha_0$ and $\beta_0$ such that $F_t$ is univalent in $\Gamma_{\alpha_0, \beta_0}$ for all $t\in[0,T].$\\
Thus we can define $V_t(z)$ as $F_t^{-1}(z)$ for $z\in\Gamma_{\alpha_0, \beta_0}$ and a simple calculation gives
$$\frac{\partial}{\partial t} V_t(z) = \frac{2}{z}, \quad V_0(z) = (1/M_0)^{-1}(z).$$
Obviously, $V_t$, and hence also $M_t,$ is uniquely determined. 
\end{proof}

\begin{remark}\label{remus} Transforms like $\mu_t\mapsto V_t(z)$ appear in free probability theory, which was introduced by D. Voiculescu in the 1980's (in \cite[p. 3059]{MR2506464}, $V_t(z)-z$ is called Voiculescu transform). We notice that Wigner's semicircle law appears here as follows: for $\meu_0=\delta_0,$ the solution of \eqref{burgers} is given by $M_t(z) = \frac{4}{z+\sqrt{z^2-16t}},$ which is $2$ times the Cauchy transform of the centred semicircle law with variance $4t.$ \\
For relations between the chordal (and radial) Loewner equation to non-commutative probability theory, we refer to \cite{MR2053711, Loewner_monotone}.
\end{remark}
\begin{remark}\label{Tatort} In \cite{MS15}, the authors prove some geometric properties of the solution $g_t$ of \eqref{Brezel}, under the assumption  that the support of $\meu_0$ is bounded. We mention one property of this case, which will be needed later on.\\ The measures  $\meu_t$ ``grow'' continuously in the following sense: $\supp \meu_s \subset \supp \meu_t$ for all $s\leq t$ and for each $x\in \R \setminus \supp \meu_s$ there exists $T>s$ such that $x\not\in  \supp \meu_t$ for all $t\leq T.$ 
This is actually a consequence of the theory of the real Burgers equation (see \cite[Section 3.4]{MS15}).
\end{remark}

\begin{remark} Let $M_t$ be a solution of  \eqref{burgers} and $c>0$. Define $G_t(z):=c \cdot M_{c^2 t}(c\cdot z)$. Then $G_t$ also satisfies \eqref{burgers} with initial value $G_0(z)=c \cdot M_{0}(c\cdot z)$. Fix some $T>0$. As $G_0(z)\to\frac2{z}$ when $c\to\infty$, we obtain together with Remark \ref{remus} the long time behaviour 
$$ \lim_{c\to\infty} c \cdot M_{c^2 T}(c\cdot z) = \frac{4}{z+\sqrt{z^2-16 T}} \quad \text{or} \quad
M_t(z) \sim \frac{4}{z+\sqrt{z^2-16t}} \; \text{as $t\to\infty.$}$$ 
\end{remark}

\subsection{Examples}

In the following we consider three examples. In all three cases we assume that $\kappa=0$, i.e. we look at the deterministic case to make the differential equations somewhat simpler.\\ 

The proof of Theorem \ref{thm:1} shows that the sequence $\frac{d}{dt} \left(\int_\R f(x) d\til_{N,t}(x)\right)$, as a sequence of functions on $[0,T],$ is uniformly bounded. In general, this is not true for $\meu_{N,t}.$
\begin{example}\label{PrinceCharles}Let $S_N=1+\frac{N+1}{2N}$. We choose
$$x_{N,k}=\frac{k}{N^2} \text{\quad and \quad} \lambda_{N,k} = \frac{1}{S_N}\left(1 + \frac{k}{N}\right) \cdot \frac{1}{N}.$$
Obviously, $$\meu_{N,0}, \til_{N,0} \overset{{\mathbf w}}{\longrightarrow} \delta_0,$$
and $\lambda_{N,k}\leq C/N$ for some $C>0$ as $\lambda_{N,k}\leq \lambda_{N,N}\sim \frac{2}{3N}$ as $N\to\infty.$ Furthermore, as $x_{N,k}\in[0,1]$ for all $k,N,$ it is easy to see that condition \eqref{cond1} is satisfied. \\ Finally, $L_N(k/N)$ is given by $L_N(k/N)=\sum_{j=1}^k \lambda_{N,j}=\frac1{S_N}(\frac{k}{N} + \frac{k}{N}\cdot \frac{k+1}{2N}),$ which shows that $L_N$ converges uniformly to $L(x)=\frac{2}{3}(x+\frac{x^2}{2}).$ Consequently, all the assumptions of Theorem \ref{thm:1} are satisfied.
\end{example}

\begin{proposition} Under the assumptions of Example \ref{PrinceCharles}, there exists $f\in C^2_b(\R,\C)$ such that \\$\frac{d}{dt} \left(\int_\R f(x) d\meu_{N,t}(x)\right)|_{t=0}$ is unbounded.
\end{proposition}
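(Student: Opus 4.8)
The plan is to exploit the discrepancy between the two measures $\meu_{N,t}$ and $\til_{N,t}$: Theorem~\ref{thm:1} tells us that $\frac{d}{dt}\int f\,d\til_{N,t}$ stays uniformly bounded, because the $\lambda_{N,j}$-weights appearing in the drift of \eqref{sigma} get paired against the $\frac1N$-weights of $\til_{N,t}$ and symmetrize into the bounded expression $A_N(t)$. For $\meu_{N,t}$, however, It\^o's formula produces a drift term of the shape
\begin{equation}\label{badterm}
\sum_{k=1}^N \lambda_{N,k}\, f'(V_{N,k}(t)) \sum_{j\neq k}\frac{2\lambda_{N,j}}{V_{N,k}(t)-V_{N,j}(t)},
\end{equation}
and now \emph{both} factors carry the weight $\lambda_{N,k}$, so the natural symmetrization gives $2\int\int \frac{f'(x)-f'(y)}{x-y}\,d\meu_{N,t}(x)\,d\meu_{N,t}(y)$ — bounded — \emph{plus} a leftover term coming from the fact that $\lambda_{N,k}\neq\frac1N$. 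Concretely, at $t=0$ the positions are the deterministic points $x_{N,k}=k/N^2$, so the inner sum $\sum_{j\neq k}\frac{2}{x_{N,k}-x_{N,j}}$ can be computed explicitly: it equals $2N^2\sum_{j\neq k}\frac1{k-j} = 2N^2\big(H_{k-1}-H_{N-k}\big)$ where $H_m$ is the $m$-th harmonic number. The factor $N^2$ is the source of the blow-up; the weights $\lambda_{N,k}\sim \frac1N$ only damp one factor of $N$.

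First I would fix $\kappa=0$ (as stipulated) so there is no Brownian part and everything at $t=0$ is deterministic, and write out $\frac{d}{dt}\int f\,d\meu_{N,t}\big|_{t=0}$ explicitly as $\sum_{k=1}^N \lambda_{N,k} f'(x_{N,k}) \sum_{j\neq k}\frac{2\cdot 2\lambda_{N,k}+2\lambda_{N,j}-2\lambda_{N,k}}{x_{N,k}-x_{N,j}}$ — i.e. keeping the drift of \eqref{sigma} with coefficients $2(\lambda_{N,k}+\lambda_{N,j})$. Then I would substitute $x_{N,k}=k/N^2$ and $\lambda_{N,k}=\frac1{S_N}(1+\frac kN)\frac1N$ and isolate the dominant contribution. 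The cleanest choice is $f$ with $f'\equiv 1$ near $[0,1]$ (take $f$ to be, say, a $C^2_b$ function equal to $x$ on a neighbourhood of $[0,1]$); then the sum collapses to $\sum_{k=1}^N \lambda_{N,k}\sum_{j\neq k}\frac{2(\lambda_{N,k}+\lambda_{N,j})}{x_{N,k}-x_{N,j}}$. Plugging in and writing $\lambda_{N,k}=\frac{c_k}{N}$ with $c_k=\frac1{S_N}(1+k/N)\to \frac23(1+k/N)$, the double sum becomes $\frac{N^2}{N^2}\sum_{k,j}\frac{(c_k+c_j)c_k}{(k-j)/1}\cdot\frac1{N^{?}}$ — I would track the powers of $N$ carefully: there are $N$ terms in $k$, each inner sum over $j$ contributes $O(N^2)\cdot(H_{k-1}-H_{N-k})$ from the $1/(k-j)$ kernel scaled by $N^2$, and the three $\frac1N$'s from $\lambda_{N,k},\lambda_{N,j}$ and the implicit structure do not suffice to kill it. The key combinatorial computation is that $\sum_{k=1}^N c_k^2 \big(H_{k-1}-H_{N-k}\big)$ does \emph{not} vanish — because $c_k$ is not symmetric about $k=N/2$, the antisymmetric kernel $H_{k-1}-H_{N-k}$ does not integrate to zero against $c_k^2$, and one gets a contribution of order $N$ (or at least unbounded).

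The main obstacle I anticipate is the bookkeeping of exactly which power of $N$ survives and confirming the leading coefficient is genuinely nonzero rather than cancelling by an accident of the particular $c_k$. To handle this I would pass to a Riemann-sum / continuum heuristic as a guide: with $k=\lfloor xN\rfloor$, $c_k\approx\frac23(1+x)$, $H_{k-1}-H_{N-k}\approx \ln\frac{x}{1-x}$, and $\lambda_{N,k}\approx \frac{2(1+x)}{3N}dx$; the double sum then looks like $N\cdot\frac23\int_0^1 \big(\tfrac23(1+x)\big)^2\, \mathrm{p.v.}\!\int_0^1\!\frac{dy}{x-y}\,dx$ up to constants, and the inner principal-value integral is $\ln\frac{x}{1-x}$, so the whole thing is $\asymp N\int_0^1 (1+x)^2\ln\frac{x}{1-x}\,dx$. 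A quick symmetry check: $\int_0^1 \ln\frac{x}{1-x}\,dx=0$ but $\int_0^1 x\ln\frac{x}{1-x}\,dx\neq 0$ and $\int_0^1 x^2\ln\frac{x}{1-x}\,dx\neq 0$, so the integral is nonzero. Hence $\frac{d}{dt}\int f\,d\meu_{N,t}\big|_{t=0}$ grows like a nonzero constant times $N$, in particular it is unbounded, which is the claim. I would then only need to turn the heuristic into a rigorous estimate: bound the error between the discrete sum and the integral by $O(\log N)$ or so (standard, since the integrand has only integrable logarithmic singularities at the endpoints and the points $k/N^2$ are equally spaced in $k$), and note the discrete sum itself is real and explicitly computable, so no subtle cancellation can occur beyond what the continuum computation already captures.
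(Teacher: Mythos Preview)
Your approach is correct and leads to the same conclusion, but the paper's argument is considerably shorter because it exploits a piece of structure specific to Example~\ref{PrinceCharles} that you did not use. After the same symmetrization you perform, both routes reduce to showing that
\[
T_N(0)\;=\;\sum_{j\neq k}\frac{\lambda_{N,k}^2-\lambda_{N,j}^2}{x_{N,k}-x_{N,j}}
\;=\;\sum_{j\neq k}\frac{2\lambda_{N,k}^2}{x_{N,k}-x_{N,j}}
\]
is unbounded. You attack this via $x_{N,k}-x_{N,j}=(k-j)/N^2$, harmonic numbers, and a Riemann-sum comparison with $N\int_0^1(1+x)^2\ln\tfrac{x}{1-x}\,dx\neq 0$; this is valid but leaves you with an error estimate to justify (the endpoint contributions and the $O(1/k)+O(1/(N-k))$ corrections in $H_{k-1}-H_{N-k}$ have to be bounded by $O(\log N)$, which is routine but needs to be written). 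The paper instead observes the one-line identity
\[
\frac{\lambda_{N,k}-\lambda_{N,j}}{x_{N,k}-x_{N,j}}=\frac{1}{S_N}
\]
(both sequences are affine in the index $k$ with matching increments), so that $T_N(0)=\sum_{j\neq k}(\lambda_{N,k}+\lambda_{N,j})/S_N$, a sum of $N^2-N$ \emph{positive} terms each at least $\lambda_{N,1}/S_N\sim \tfrac{4}{9N}$, giving $T_N(0)\gtrsim \tfrac{4}{9}N\to\infty$ with no asymptotics required. Your method would survive if the weights were not affine in $k$; the paper's does not, but for this particular example it is the cleaner proof.
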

\begin{proof}
Note that \begin{equation}\label{Karl}
\frac{\lambda_{N,k}-\lambda_{N,j}}{x_{N,k}-x_{N,j}}=1/S_N  \frac{k/N^2 - j/N^2}{k/N^2-j/N^2} = 1/S_N.\tag{$*$}
\end{equation}
Let $f\in C^2_b(\R, \C)$. Then we obtain

\begin{eqnarray*}
 && \frac{d}{dt}\left(\int_\R f(x) \, d\meu_{N,t}(x)\right) = \frac{d}{dt}\left(\sum_{k=1}^N \lambda_{N,k} f(V_{N,k}(t)) \right) 
= \sum_{k=1}^N  \lambda_{N,k} f'(V_{N,k}(t) \sum_{j\not=k}\frac{2(\lambda_{N,k}+\lambda_{N,j})}{V_{N,k}(t)-V_{N,j}(t)} \\
&=& \sum_{k=1}^N \lambda_{N,k} f'(V_{N,k}(t))\sum_{j\not=k}\frac{2\lambda_{N,j}}{V_{N,k}(t)-V_{N,j}(t)}   + 
\sum_{k=1}^N  \lambda_{N,k}^2 f'(V_{N,k}(t))\sum_{j\not=k}\frac{2}{V_{N,k}(t)-V_{N,j}(t)}  \\
&=& \int_{}\int_{x\not=y} \frac{2f'(x)}{x-y} \,
d\meu_{N,t}(x) d\meu_{N,t}(y) + 2\sum_{j\not=k}   \frac{\lambda_{N,k}^2f'(V_{N,k}(t))}{V_{N,k}(t)-V_{N,j}(t)} \\
&=&\int_{}\int_{\R^2} \frac{f'(x)-f'(y)}{x-y} \, d\meu_{N,t}(x)d\meu_{N,t}(y) +  \sum_{k=1}^N\lambda_{N,k}^2f''(V_{N,k}(t)) \\
&+& \underbrace{\sum_{j\not=k}\frac{\lambda_{N,k}^2 f'(V_{N,k}(t))- \lambda_{N,j}^2 f'(V_{N,j}(t))}{V_{N,k}(t)-V_{N,j}(t)}}_{:=T_N(t)}.
\end{eqnarray*} 
 Now assume that $f'(x)=1$ for all $x\in[0,1].$ It is easy to see that the first two terms are uniformly bounded. However,
\begin{eqnarray*}
T_N(0) &=& \sum_{j\not=k}\frac{\lambda_{N,k}^2- \lambda_{N,j}^2}{x_{N,k}-x_{N,j}} \underset{\eqref{Karl}}{=} \sum_{j\not=k}(\lambda_{N,k}+\lambda_{N,j})/S_N \geq
\sum_{j\not=k} \lambda_{N,1} / S_N \\
&=& \frac{(N^2-N)(\frac1{N} + \frac1{N^2})}{S_N^2} \to \infty \quad \text{as $N\to\infty$}.
\end{eqnarray*}
\end{proof}

Next we have a look at two examples where condition \eqref{max} is not satisfied, as for every $N$ there is one coefficient $\lambda_{N,k}=\frac1{2}.$ \\

\begin{example}\label{Johnny} For $N\geq 2$, let 
$$\text{$x_{N,k}=\frac{k}{N}$ and $\lambda_{N,k}=\frac{1}{2(N-1)}$ for all $k\not= N,$ and $x_{N,N}=2$, $\lambda_{N,N}=\frac1{2}$}.$$
\end{example}

\begin{proposition}\label{not_tight}
Let $T>0.$ Under the assumptions of Example \ref{Johnny}, the sequence $\{\meu_{N,t}\}_N$ is not tight with respect to the topology of $\mathcal{M}(T).$
\end{proposition}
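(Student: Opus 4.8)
The plan is to exhibit a fixed, $N$-independent fraction of the mass of $\meu_{N,t}$ that escapes to $+\infty$ as $N\to\infty$. Concretely, since $\lambda_{N,N}=\tfrac12$ does not tend to $0$, the idea is to prove that the rightmost driving point $V_{N,N}(t)$ satisfies $V_{N,N}(t)\to\infty$ as $N\to\infty$ for every fixed $t>0$; then the atom of mass $\tfrac12$ sitting at $V_{N,N}(t)$ leaves every bounded interval, so $\{\meu_{N,t_0}\}_N$ is not relatively compact in $\mathcal P(\R)$ for any single time $t_0\in(0,T]$, which already rules out the existence of a subsequence of $\{\meu_{N,\cdot}\}_N$ converging in $\mathcal M(T)$. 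Throughout one uses that $\kappa=0$, so the $V_{N,k}$ solve the deterministic system \eqref{sigma}, are well-defined and continuous on $[0,T]$, and (the generated curves being disjoint) keep their initial order $V_{N,1}(t)<\dots<V_{N,N}(t)$.

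The first step I would carry out is to record a conservation law: summing \eqref{sigma} over $k$ and pairing the interaction terms $(k,j)$ and $(j,k)$, which cancel because $\lambda_{N,k}+\lambda_{N,j}$ is symmetric, gives $\tfrac{d}{dt}\sum_{k=1}^N V_{N,k}(t)=0$, hence $\sum_{k=1}^N V_{N,k}(t)\equiv\sum_{k=1}^N x_{N,k}=\tfrac{N-1}{2}+2$. Then I would isolate $W(t):=V_{N,N}(t)$. Since $\lambda_{N,N}+\lambda_{N,j}=\tfrac{N}{2(N-1)}$ for $j\le N-1$, equation \eqref{sigma} becomes $W'(t)=\tfrac{N}{N-1}\sum_{j=1}^{N-1}\tfrac{1}{W(t)-V_{N,j}(t)}>0$, so $W$ is increasing with $W(t)\ge W(0)=2$. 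By the Cauchy--Schwarz (harmonic-mean) inequality, $\sum_{j=1}^{N-1}\tfrac{1}{W(t)-V_{N,j}(t)}\ge\tfrac{(N-1)^2}{\sum_{j=1}^{N-1}(W(t)-V_{N,j}(t))}$, and the conservation law rewrites the denominator as $NW(t)-\tfrac{N+3}{2}$, which is positive (because $W(t)\ge2$) and $\le NW(t)$. Hence $W'(t)\ge\tfrac{N-1}{W(t)}$, i.e. $\tfrac{d}{dt}W(t)^2\ge 2(N-1)$, so $V_{N,N}(t)=W(t)\ge\sqrt{4+2(N-1)t}$, which tends to $\infty$ as $N\to\infty$ for each fixed $t>0$.

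To finish, I would fix $t_0\in(0,T]$: for every $R>0$ there is an $N_0$ with $\sqrt{4+2(N-1)t_0}>R$ for $N\ge N_0$, so $\meu_{N,t_0}([-R,R])\le 1-\lambda_{N,N}=\tfrac12$ for all $N\ge N_0$; consequently no subsequence of $\{\meu_{N,t_0}\}_N$ can converge weakly to a probability measure, since any such limit would have to place more than $\tfrac12$ of its mass on some bounded interval. But if $\{\meu_{N,t}\}_N$ were tight in $\mathcal M(T)$, some subsequence $\meu_{N_k,\cdot}$ would converge uniformly on $[0,T]$ to an element of $\mathcal M(T)$, forcing $\meu_{N_k,t_0}$ to converge weakly to a probability measure — a contradiction.

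The only step that requires real work is the differential inequality for $W=V_{N,N}$: one needs a lower bound on $W'$ that actually grows with $N$, and the key point is that the conservation law pins down $\sum_{j<N}(W-V_{N,j})$ \emph{exactly}, so that the harmonic-mean bound can be applied without having to track the individual motions of the cluster $\{V_{N,j}\}_{j<N}$ (which spreads out, partly toward $-\infty$, and would otherwise be awkward to control). Everything else is a formal consequence of the definition of tightness together with the fact that weak limits cannot recover mass that has escaped to infinity.
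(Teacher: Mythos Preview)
Your proof is correct and is substantially more elementary than the one in the paper. Both arguments reach the same intermediate goal, namely $V_{N,N}(t)\to\infty$ for each fixed $t>0$, but they get there by very different routes. The paper first bounds the cluster $V_{N,1},\dots,V_{N,N-1}$ from above and below by sandwiching it between two auxiliary systems via a quasimonotone comparison principle, then invokes the convergence results of Section~\ref{Princess} (Remark~\ref{Tatort} and the C\'epa--L\'epingle theorem) to control those auxiliary systems uniformly in $N$; only then does it deduce a differential inequality $V_{N,N}'\ge \lfloor N/2\rfloor/(V_{N,N}-B_2)$ on a small time interval $[0,T_0]$. Your argument bypasses all of this: the observation that $\sum_k V_{N,k}(t)$ is conserved lets you compute $\sum_{j<N}(W-V_{N,j})=NW-\tfrac{N+3}{2}$ \emph{exactly}, and then the AM--HM inequality gives $W'\ge (N-1)/W$ directly, yielding the explicit and global bound $V_{N,N}(t)\ge\sqrt{4+2(N-1)t}$. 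What your approach buys is a self-contained, quantitative argument that does not appeal to any of the paper's limit theorems; what the paper's approach illustrates (at the cost of some length) is how the machinery built up in Section~\ref{Princess} can be fed back into concrete estimates on the driving functions.
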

\begin{proof}
We show that $V_{N,N}(t)\to +\infty$ as $N\to\infty$ for every $t>0.$ As $V_{N,N}$ carries the mass $1/2$, this proves that  $\{\meu_{N,t}\}_N$ is not tight.\\
First, we need an upper bound for $V_{N,N-1}$. For $k\in{1,...,N-1}$ we have 
\begin{eqnarray*}
dV_{N,k}(t) &\leq& \sum_{j\not=k,N}\frac{\frac{2}{N-1}}{V_{N,k}(t)-V_{N,j}(t)}dt. 
\end{eqnarray*}
 Let $W_{N,1},... W_{N,N-1}$ be the of solution the system
\begin{eqnarray*}
dW_{N,k}(t) &=&  \sum_{j\not=k,N} \frac{\frac{2}{N-1}}{W_{N,k}(t)-W_{N,j}(t)}dt, \quad W_{N,k}(0)=x_{N,k}.
\end{eqnarray*}
As the function $$(x_1,...,x_{N-1})\mapsto \left(\sum_{j\not=1,N} \frac {\frac{2}{N-1}}{x_1-x_j},..., \sum_{j\not=N-1,N} \frac {\frac{2}{N-1}}{x_{N-1}-x_j}\right)$$ is quasimonotone, it follows that $V_{N,k}(t)\leq W_{N,k}(t)$ for all $t\geq 0$ (Theorem 4.2 in \cite{MR591529}).
Note that $W_{N,1},... W_{N,N-1}$ is a simultaneous multiple SLE process for $N-1$ curves, each growing with ``speed'' $\frac{1}{2(N-1)}.$  From Remark \ref{Tatort} we conclude that there exists $T_0>0$ and a bound $B_1\in(1,2)$ such that 
$W_{N,N-1}(t)\leq B_1$ for all $t\in[0,T_0]$ and $N\geq 2.$ Hence, $V_{N,k}(t)\leq B_1 < 2$ for all $t\in[0,T_0].$ \\
This upper bound now gives us also a lower bound as follows: \\
As $\frac{d}{dt}V_{N,N}(t)\geq0$ and $V_{N,N}(0)=2$ we have $V_{N,N}(t)\geq 2$ for all $t\geq0$. Thus, for $k\in\{1,...,N-1\}$ we have
\begin{eqnarray*}
dV_{N,k}(t) &=& \sum_{j\not=k,N}\frac{\frac{2}{N-1}}{V_{N,k}(t)-V_{N,j}(t)}dt + \frac{2(1/2+\frac1{2(N-1)})}{V_{N,k}(t)-V_{N,N}(t)}dt \\
&\geq& \sum_{j\not=k,N}\frac{\frac{2}{N-1}}{V_{N,k}(t)-V_{N,j}(t)}dt + \frac{2}{B_1-2}dt.
\end{eqnarray*}
 Let $Y_{N,1},... Y_{N,N-1}$ be the of solution the system
\begin{eqnarray*}
dY_{N,k}(t) &=& \sum_{j\not=k,N}\frac{\frac{2}{N-1}}{Y_{N,k}(t)-Y_{N,j}(t)}dt + \frac{2}{B_1-2}dt, \quad Y_{N,k}(0)=x_{N,k}.
\end{eqnarray*}
From \cite{MR1440140}, Theorem 5.1, it follows that the sequence $w_{N,t}=\sum_{k=1}^{N-1}\frac1{N-1}\delta_{Y_{N,k}(t)}$ of measure-valued processes converges as $N\to\infty.$ This does not imply that $Y_{N,1}(t)$ is bounded from below, but we can conclude that, for example, $Y_{N,\lfloor N/2 \rfloor}$ is bounded from below, i.e. there exists $B_2<1$ such that $Y_{N,\lfloor N/2 \rfloor}(t) \geq B_2$ for all $t\in[0,T_0].$\\
Now we look at $V_{N,N}$, which satisfies
$$dV_{N,N} =  \sum_{j\not=N}\frac{2(\frac1{2}+\frac{1}{2(N-1)})}{V_{N,N}(t)-V_{N,j}(t)}dt \geq
 \sum_{j\leq\lfloor N/2 \rfloor}\frac{1}{V_{N,N}(t)-B_2}dt  =  \frac{\lfloor N/2 \rfloor}{V_{N,N}(t)-B_2}dt $$
for $t\in[0,T_0],$ which implies $$V_{N,N}(t)\geq B_2 + \sqrt{4 - 4 B_2 + B_2^2 - 2 t + 2 \lfloor N/2 \rfloor t}.$$
Hence, $V_{N,N}(t)\to \infty$ for every $t\in(0,T_0]$ as $N\to\infty.$ As $t\mapsto V_{N,N}(t)$ is increasing, we conclude that $V_{N,N}(t)\to \infty$ for every $t>0$.
\end{proof}
Even though $\{\meu_{N,t}\}_N$ is not tight in this example, it is easy to see that $g_{N,t}$ converges as $N\to\infty.$ If we decompose $\meu_{N,t} = \beta_{N,t} + \frac1{2}\delta_{V_{N,N}(t)},$ then it can easily be shown that $\beta_{N,t}$ converges to a process $\beta_t$ and that $P_t(z)=\int_\R\frac2{z-u} d\beta_t(u)$ satisfies a Burgers equation.

\begin{example}\label{Molly} Assume that $N=2K+1,$ $K\in\N,$ and let
$$\text{$x_{N,k}\in[-2,-1]$ and  $x_{N,2K+2-k}=-x_{N,k}\in[1,2]$ for all $k \leq K.$}$$
Assume that $x_{N,K+1}=0.$ The coefficients $\lambda_{N,k}$ are chosen as 
$$ \lambda_{N,K+1} = 1/2, \quad \lambda_{N,k}=\frac1{4K}, k\not=K+1. $$
\end{example}

As $N\to\infty$, the sequence $L_N$ converges pointwise, but not uniformly, to $L(x)=1/2 x,$ $x\in[0,1/2),$ $L(x)=1/2x + 1/2,$ $x\in[1/2,1].$

\begin{proposition} Under the assumptions of example \ref{Molly}, there exists $T_0>0$ such that the sequence $\{\meu_{N,t}\}_N$ is tight with respect to the topology of $\mathcal{M}(T_0).$
\end{proposition}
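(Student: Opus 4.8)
The plan is to exploit the reflection symmetry of the configuration to freeze the heavy atom and then reduce the question to the setting of Theorem \ref{thm:1}. Since $\kappa=0$, the functions $V_{N,1},\dots,V_{N,2K+1}$ are the (collision-free, hence unique) solution of an ODE system whose initial data and whose drift field are invariant under the involution $V_k\mapsto -V_{2K+2-k}$; here one uses $x_{N,2K+2-k}=-x_{N,k}$, $x_{N,K+1}=0$ and $\lambda_{N,2K+2-k}=\lambda_{N,k}$. By uniqueness, $V_{N,2K+2-k}(t)=-V_{N,k}(t)$ for all $t$, so in particular $V_{N,K+1}(t)\equiv 0$: the atom of mass $\tfrac12$ never moves. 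Writing $\sigma(x)=-x$ and $\beta_{N,t}:=\sum_{k\le K}\lambda_{N,k}\delta_{V_{N,k}(t)}$ for the left half-cluster, we have $\meu_{N,t}=\beta_{N,t}+\tfrac12\delta_0+\sigma_*\beta_{N,t}$, and since $\nu\mapsto\nu+\tfrac12\delta_0+\sigma_*\nu$ is continuous it suffices to prove that $\{\beta_{N,\cdot}\}_N$ is tight in $\mathcal M(T_0)$.

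The crux — the part I expect to be the main obstacle — is an a priori confinement of the side clusters: I claim there are $T_0>0$ and $0<r<R$, independent of $N$, with $V_{N,k}(t)\in[-R,-r]$ for every $k\le K$ and $t\in[0,T_0]$ (equivalently, by symmetry, the right cluster stays in $[r,R]$). To obtain this I would compare the left cluster with the isolated system $W_{N,1}<\dots<W_{N,K}$ solving $\frac{d}{dt}W_{N,k}=\sum_{j\le K,\,j\ne k}\tfrac{1/K}{W_{N,k}-W_{N,j}}$, $W_{N,k}(0)=x_{N,k}$, which is (a time change of) the driving process of a simultaneous multiple SLE for $K$ curves issuing from $[-2,-1]$; exactly as in the proof of Proposition \ref{not_tight}, Remark \ref{Tatort} together with \cite[Theorem 5.1]{MR1440140} then provides $T_0>0$, some $r\in(0,1)$ and $R'>2$ with $W_{N,K}(t)\le -2r$ and $W_{N,1}(t)\ge -R'$ for all $N$ and $t\in[0,T_0]$. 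Writing, for $k\le K$, $\frac{d}{dt}V_{N,k}=G_k(V_{N,1},\dots,V_{N,K})+E_{N,k}(t)$ with $G_k(x)=\sum_{j\le K,\,j\ne k}\tfrac{1/K}{x_k-x_j}$ the same quasimonotone, translation-invariant drift and $E_{N,k}(t)=\tfrac{2(\lambda_{N,k}+1/2)}{V_{N,k}(t)}+\sum_{j\ge K+2}\tfrac{2(\lambda_{N,k}+\lambda_{N,j})}{V_{N,k}(t)-V_{N,j}(t)}$ the interaction with the central atom and the right cluster, one checks that $-\tfrac3r\le E_{N,k}(t)\le 0$ as long as the left cluster stays in $[-R,-r]$ and the right one in $[r,R]$ (here $\sum_{k\le K}\lambda_{N,k}=\tfrac14$ and all relevant differences have modulus $\ge r$). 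Comparing $(V_{N,1},\dots,V_{N,K})$ from above with $(W_{N,1},\dots,W_{N,K})$ and from below with $(W_{N,1}-\tfrac3r t,\dots,W_{N,K}-\tfrac3r t)$, via the quasimonotone comparison theorem \cite[Theorem 4.2]{MR591529} (as in Proposition \ref{not_tight}), gives
$$W_{N,k}(t)-\tfrac3r\,t\ \le\ V_{N,k}(t)\ \le\ W_{N,k}(t)$$
for $k\le K$ on that time set, whence $V_{N,K}(t)\le -2r$ and $V_{N,1}(t)\ge -R'-\tfrac3r T_0$. Shrinking $T_0$ so that $\tfrac3r T_0<1$ and setting $R:=R'+1$, these inequalities are strict on any interval on which the clusters have not yet left the prescribed regions, so (the configuration starting strictly inside, $x_{N,k}\in[-2,-1]$) a routine continuity/bootstrap argument upgrades them to all of $[0,T_0]$, uniformly in $N$.

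Granting the confinement, the tightness of $\{\beta_{N,\cdot}\}_N$ follows as in the proof of Theorem \ref{thm:1}. All $\beta_{N,t}$ are supported in the fixed compact $[-R,-r]$, so condition \eqref{cond1} is trivial, and by the reduction to real-valued processes (cf. the proof of Theorem \ref{thm:1} and \cite[Section 3]{MR1217451}) it is enough to check that $t\mapsto\int_\R f\,d\beta_{N,t}$ is equicontinuous uniformly in $N$ for each $f\in C^2_b(\R,\C)$. Differentiating, $\frac{d}{dt}\int f\,d\beta_{N,t}=\sum_{k\le K}\lambda_{N,k}f'(V_{N,k})\left(G_k(V)+E_{N,k}(t)\right)$; the $G_k$-part symmetrises (the left-cluster weights are all $\tfrac1{4K}$) into $\tfrac1{8K^2}\sum_{k\ne j\le K}\tfrac{f'(V_{N,k})-f'(V_{N,j})}{V_{N,k}-V_{N,j}}$, bounded by $\tfrac18\|f''\|_\infty$, and the $E_{N,k}$-part is bounded by $\tfrac{3}{4r}\|f'\|_\infty$ on $[0,T_0]$. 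Thus $\frac{d}{dt}\int f\,d\beta_{N,t}$ is bounded uniformly in $N$ and $t$, which is the equicontinuity needed. Hence $\{\beta_{N,\cdot}\}_N$, and with it $\{\meu_{N,\cdot}\}_N$, is tight in $\mathcal M(T_0)$. Only the confinement is genuinely delicate, and it is precisely the reflection symmetry — keeping the heavy atom centred, in contrast with Example \ref{Johnny} where it escapes to infinity — that makes it go through.
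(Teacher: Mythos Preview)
Your approach coincides with the paper's: exploit the reflection symmetry to freeze $V_{N,K+1}\equiv 0$, decompose $\alpha_{N,t}=\beta_{N,t}+\tfrac12\delta_0+\sigma_*\beta_{N,t}$, obtain an a priori separation of the left cluster from the origin via comparison with the isolated system (just as in Proposition~\ref{not_tight}), and then bound $\tfrac{d}{dt}\int f\,d\beta_{N,t}$ uniformly in $N$. The paper organises the derivative computation slightly differently --- it uses the symmetry to rewrite the right-cluster interaction as $\tfrac{f'(x)+f'(y)}{x+y}$ terms rather than keeping it as a perturbation $E_{N,k}$ --- but the substance is identical.

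There is, however, one step you over-engineer and where you make an unjustified claim. You seek a \emph{two}-sided confinement $V_{N,k}(t)\in[-R,-r]$, and for the outer bound you assert that $W_{N,1}(t)\ge -R'$ follows from Remark~\ref{Tatort} together with \cite[Theorem~5.1]{MR1440140}. But weak convergence of the empirical measure does \emph{not} control the extremal particle; the paper makes exactly this point in the proof of Proposition~\ref{not_tight} (``This does not imply that $Y_{N,1}(t)$ is bounded from below''). Fortunately the outer bound is unnecessary. Since order is preserved and $V_{N,K+1}\equiv 0$, one has $V_{N,k}<0<V_{N,j}$ for $k\le K<K+1<j$, so $E_{N,k}(t)\le 0$ holds \emph{unconditionally}; hence the upper comparison $V_{N,k}(t)\le W_{N,k}(t)$ needs no bootstrap at all, and the single estimate $V_{N,K}(t)\le W_{N,K}(t)\le B<0$ on $[0,T_0]$ is everything required: it bounds $|E_{N,k}|$ and all the singular denominators in your derivative computation. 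For the no-escape-to-$-\infty$ part you can, instead of invoking compact support, simply run the same derivative bound with $\varphi$ in place of $f$ (exactly as in the proof of Theorem~\ref{thm:1}), since $\varphi',\varphi''$ are bounded and condition~\eqref{cond1} holds trivially for $\beta_{N,0}$ supported in $[-2,-1]$. With this trimming your argument is precisely the paper's.
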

\begin{proof}
By symmetry, we have $V_{N,K+1}(t)=0$ for every $K\in\N$ and $t\geq 0$ and we can decompose the measure $\meu_{N,t}$ as $\meu_{N,t} = \beta_{N,t} + \frac1{2}\delta_{0} + \gamma_{N,t},$ where the support of $\beta_{N,t}$ is contained in $(-\infty, 0)$ and $\gamma_{N,t}(A)=\beta_{N,t}(-A)$ for every Borel set $A.$\\
Just as in the proof of Proposition \ref{not_tight}, we obtain that there exist $T_0>0$ and $B \in (-1,0)$ such that 
\begin{equation}\label{est}
\text{$V_{N,K}(t) \leq B$ for all $K\in\N$ and $t\in [0,T_0].$}
\end{equation}
 
Now let $f\in C^2_b(\R,\C).$ Then we have 

\begin{eqnarray*}
 && \frac{d}{dt}\left(\int_\R f(x) \, d\beta_{N,t}(x)\right) = \frac{d}{dt}\left(\sum_{k=1}^K \frac{1}{4K} f(V_{N,k}(t)) \right) \\
&=& \sum_{k=1}^K  \frac{f'(V_{N,k}(t))}{4K}\left(\sum_{\stackrel{j\leq K}{j\not=k}}\frac{2(\lambda_{N,k}+\lambda_{N,j})}{V_{N,k}(t)-V_{N,j}(t)} 
+ \sum_{\stackrel{j\geq K+2}{j\not=k}}\frac{2(\lambda_{N,k}+\lambda_{N,j})}{V_{N,k}(t)-V_{N,j}(t)}
 + \frac{2(\lambda_{N,k} +\lambda_{N,K+1})}{V_{N,k}(t)-V_{N,K+1}(t)}\right) \\
&=& \sum_{k=1}^K  \frac{f'(V_{N,k}(t))}{4K}\left(\sum_{\stackrel{j\leq K}{j\not=k}}\frac{\frac1{K}}{V_{N,k}(t)-V_{N,j}(t)} + \sum_{\stackrel{j\leq K}{j\not=k}}\frac{\frac1{K}}{V_{N,k}(t)+V_{N,j}(t)} + \frac{\frac1{2K}+1}{V_{N,k}(t)}\right) \\
&=& 4\int_{}\int_{x\not=y} \frac{f'(x)}{x-y} + \frac{f'(x)}{x+y} \,
d\beta_{N,t}(x) d\beta_{N,t}(y) \,  + \sum_{k=1}^K  \frac{f'(V_{N,k}(t))(\frac1{2K}+1)}{4K V_{N,k}(t)}\\
&=& 2\int_{}\int_{x\not=y} \frac{f'(x)-f'(y)}{x-y} + \frac{f'(x)+f'(y)}{x+y} \,
d\beta_{N,t}(x) d\beta_{N,t}(y) \,  + \sum_{k=1}^K  \frac{f'(V_{N,k}(t))(\frac1{2K}+1)}{4K V_{N,k}(t)}.
\end{eqnarray*} 
The term $\frac{f'(x)-f'(y)}{x-y}$ is bounded as $f\in C^2_b(\R,\C),$ and the integral over $\frac{f'(x)+f'(y)}{x+y}$ is bounded for $t\in[0,T_0]$ because of \eqref{est}. The sum is also bounded for all $t\in[0,T_0]$ because of \eqref{est}. We conclude that $\{\beta_{N,t}\}_N$ and thus $\{\meu_{N,t}\}_N$ is tight w.r.t. $\mathcal{M}(T_0).$
\end{proof}

It seems that the last two example behave in the same way when $\kappa>0.$ Figures \ref{fig2} and \ref{fig3} show simulations of the driving functions $V_{N,1},...,V_{N,N}$ for these two cases on the time interval $[0,1]$ for $N=51$ and $\kappa=1.$ The driving function with mass $\frac1{2}$ is coloured red.

\begin{figure}[ht]
\begin{minipage}[hbt]{7.5cm}
\centering
\includegraphics[width=6.5cm]{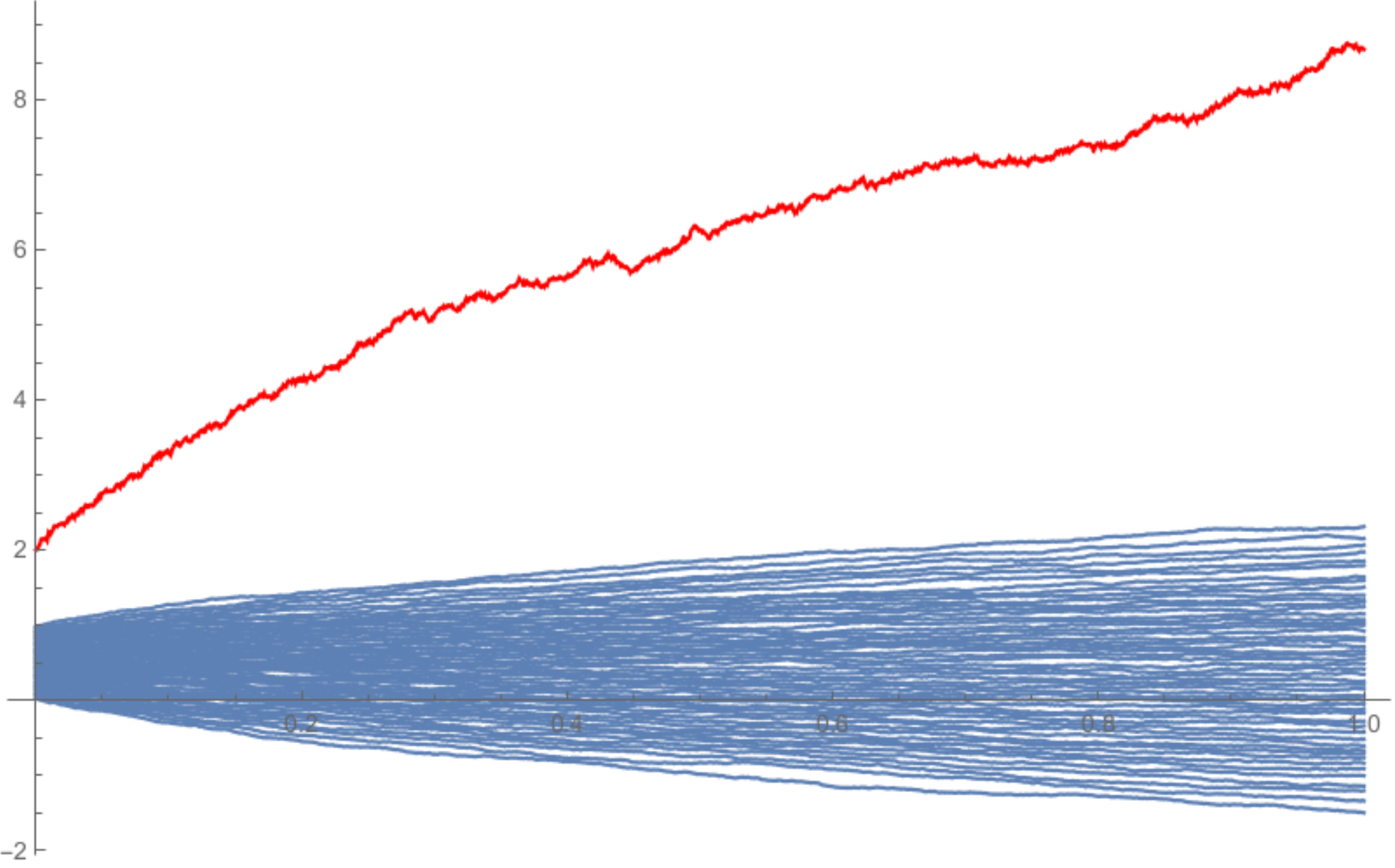}
\caption{Mass $\frac1{2}$ in $x_{N,N}$.}
\label{fig2}
\end{minipage}
\hfill
\begin{minipage}[hbt]{7.5cm}
\centering
\includegraphics[width=6.5cm]{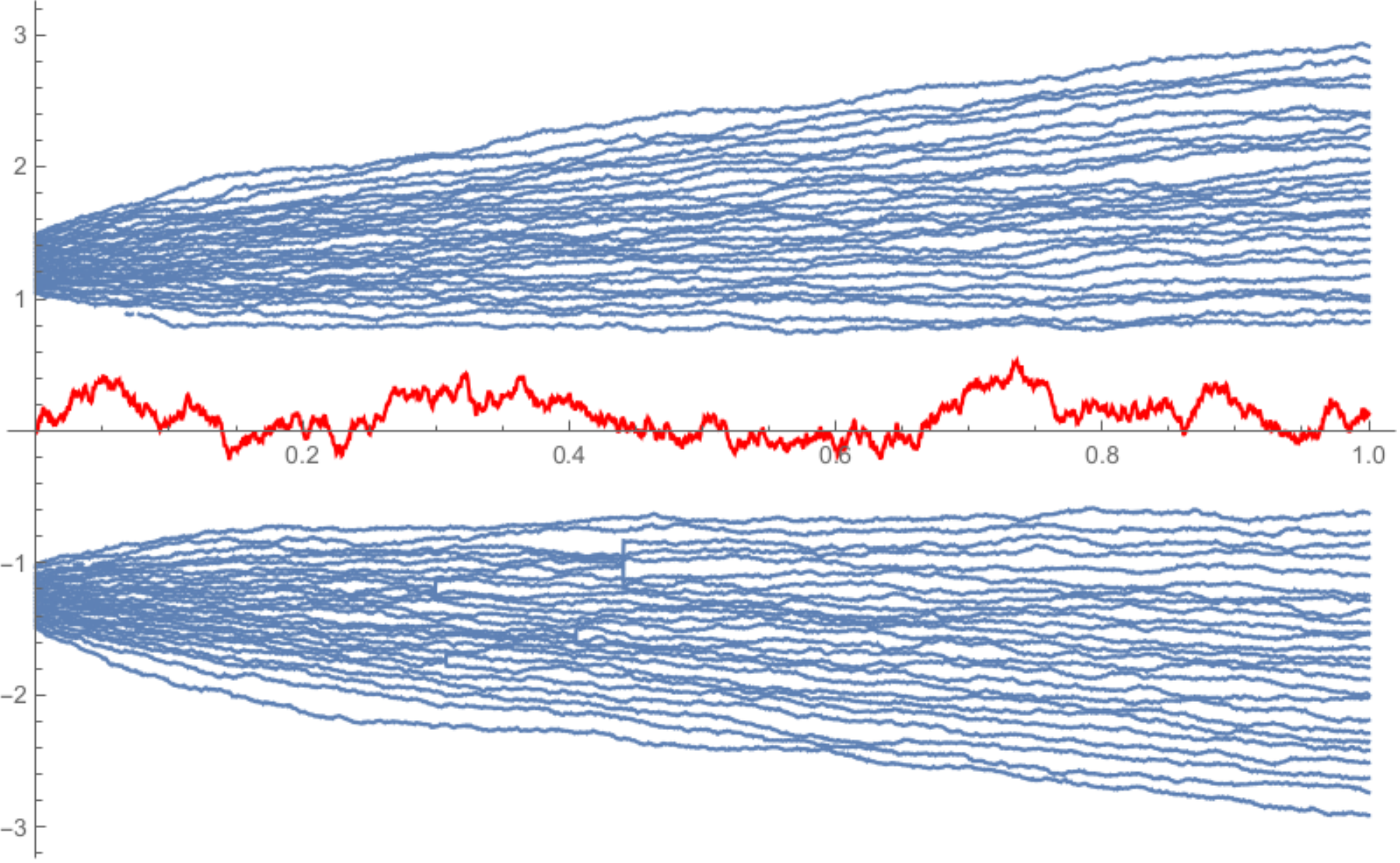}
\caption{Mass $\frac1{2}$ in $x_{N,(N-1)/2}$.}
\label{fig3}
\end{minipage}
\end{figure}

\subsection{Problems and Remarks}

\begin{enumerate}
\item  As already mentioned in Remark \ref{Napoli}, the convergence of $g_{N,t}$ from Corollary \ref{thm:2} follows as soon as we know that equation \eqref{abstractBurgers} has only one solution.
\item Example \ref{PrinceCharles} suggests that the process $\meu_t$ might not in general be differentiable (in the distributional sense) at $t=0$.\\
 \textbf{Question:} Is it always differentiable for $t>0$?\\
 Also, we notice that in \cite{MR1698948} it is shown that, for a special case, $\meu_t$ has a density with respect to the Lebesgue measure for $t>0$.\\
 \textbf{Question:} Is this always true for $\meu_t$ under the assumptions made in Theorem \ref{thm:1}?
\item Fix a parameter $\kappa\in(0,4].$ For each $N\in\N,$ we consider $2N$ boundary points $0< p_{N,1} < p_{N,2} < ... <p_{N,2N}=1$  for multiple SLE on $\Ha$.
We set $\mathbf{p}_N:=(p_{N,1},...,p_{N,2N})$. Recall that $S(\mathbf{p}_N)$ is the set of all $C_N$ configurations for these points, endowed with the probabilities given by formula \eqref{QueenMom}.\\
 
Now we can ask for the limit of $S(\mathbf{p}_N)$ as $N\to\infty$ by using an idea from combinatorics, to encode configurations into Dyck paths.\\

An $N$--Dyck path is a continuous function $d:[0,2N]\to [0,\infty)$ defined as follows:
\begin{itemize}
\item $d(0) = 0$ and $d(2N)=0,$
\item $d(k) - d(k+1)\in\{-1,+1\}$ for all $k\in\{0,...,2N-1\},$
\item for all other points $x\in [0,2N]\setminus \{0,1, ..., 2N\}$, $d(x)$ is defined by linear interpolation.
\end{itemize}

The set of all $N$--Dyck paths corresponds to the set $S(\mathbf{p}_N)$ in the following way.\\
An $N$--Dyck path can be completely described by $2N$ numbers $L_1,...,L_{2N}\in\{-1,+1\}$ representing the slopes of the $2N$ line segments. These numbers are determined by a configuration for $\mathbf{p}_N$ as follows (see the figures below for an example):
\begin{itemize}
\item[(i)] $L_{k}=+1$ and $L_{k+1}=-1$ if and only if $p_k$ and $p_{k+1}$ are connected by a simple curve;
\item[(ii)] $L_{k}=L_{k+1}=+1$ if and only if the curve connecting $p_{k+1}$ is ``contained'' in the curve connecting $p_{k}$;
\item[(iii)] $L_{k}=L_{k+1}=-1$ if and only if the curve connecting $p_{k}$ is ``contained'' in the curve connecting $p_{k+1}$.
\end{itemize}

\begin{figure}[ht]
\rule{0pt}{0pt}
\hspace{1.2cm}
\begin{minipage}[hbt]{6.5cm}
\centering
\includegraphics[width=6.5cm]{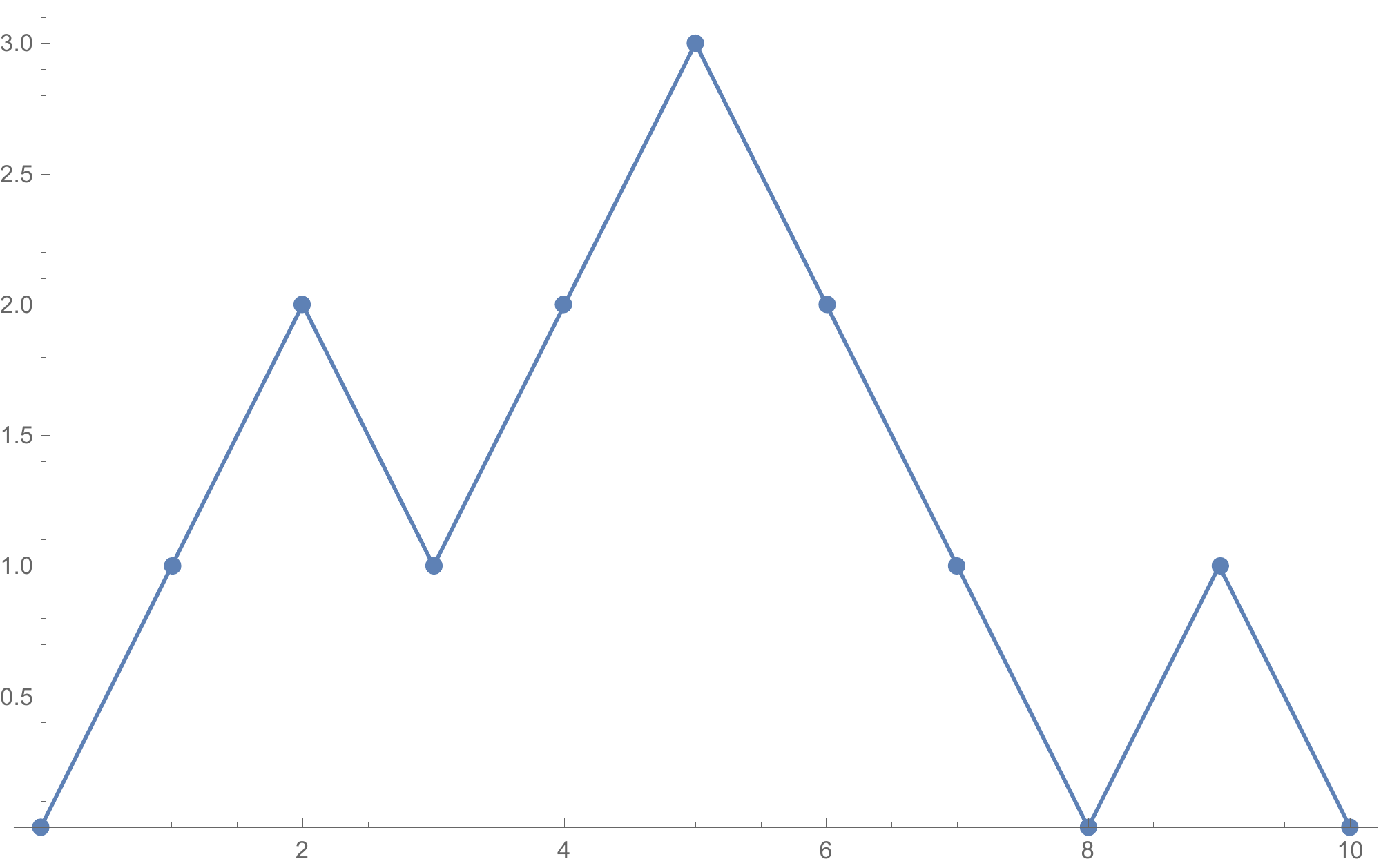}
\caption{A Dyck path for $N=5$.}
\label{fig5}
\end{minipage}
\hspace{1.2cm}
\begin{minipage}[hbt]{6.5cm}
\centering
\includegraphics[width=6.5cm]{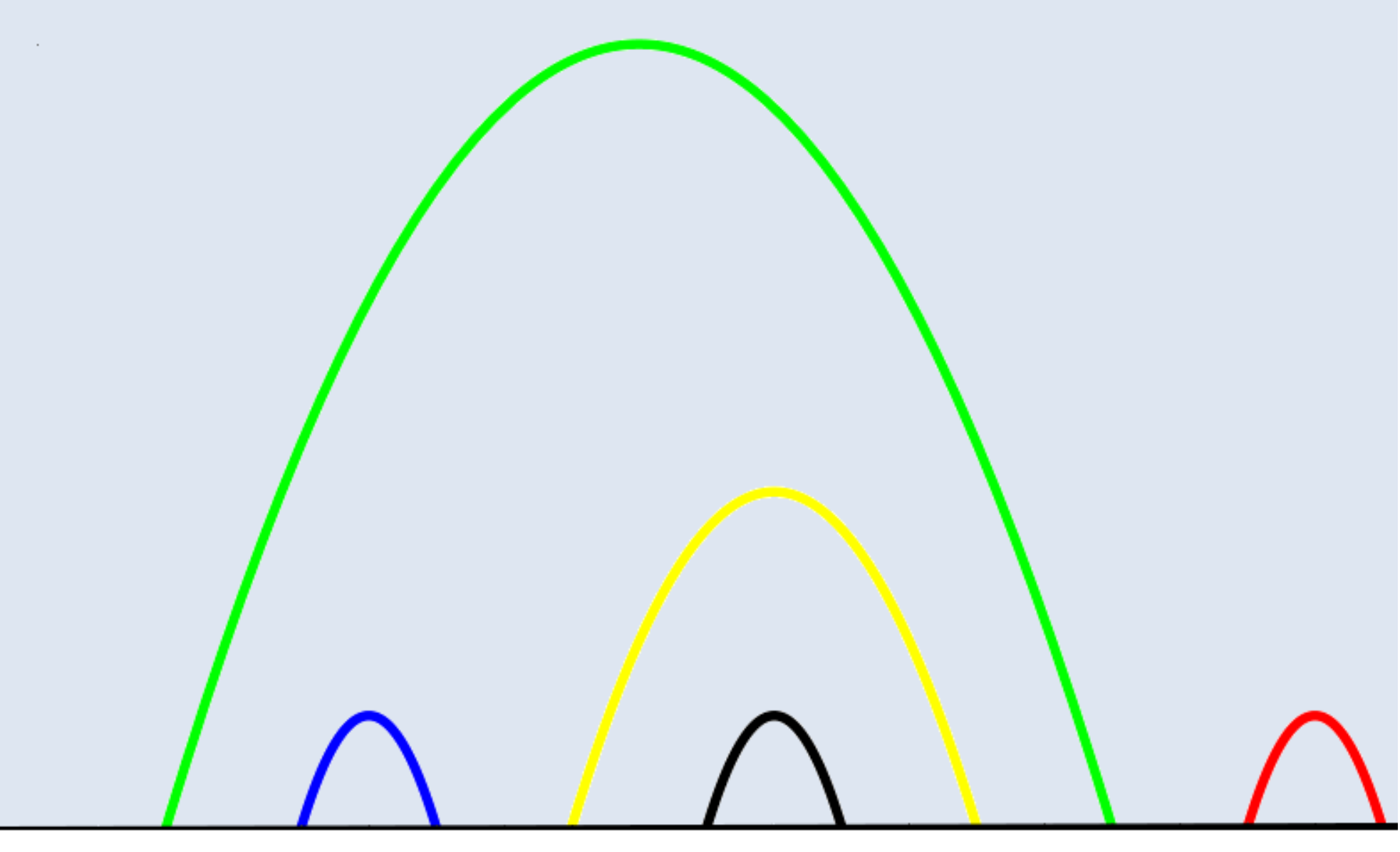}
\caption{The configuration corresponding to Figure 1.}
\label{fig6}
\end{minipage}
\end{figure}

Define also $p_{N,0}:=0$ and fix some $\gamma\in(0,1]$. Normalize now such a Dyck path $d$ to define a  normalized Dyck path as a continuous function $e_N:[0,1]\to [0,\infty)$ with $e_N(p_{N,0})=0$ and
\begin{equation}
e_N(t) = e_N(p_{N,k}) + t\cdot \frac{d(k+1)-d(k)}{(p_{{N,k+1}}-p_{N,k})^{\gamma}}
\end{equation}
for $t\in[p_{N,k}, p_{N,k+1}]$, $k=0,...,2N-1$. Then the set of all  normalized Dyck paths is a subset of the space $C([0,1], \R)$ endowed with the topology of uniform convergence. It becomes a probability space by taking the corresponding probabilities from the set $S(\mathbf{p}_N)$.
Let $E_N(t)$ be a random path from this set.\\ \textbf{Question:} Does $E_N(t)$  converge in distribution as $N\to\infty$?

\begin{remark} Take $p_k=\frac{k}{2N}$ and $\gamma=\frac1{2}.$
If all the probabilities are equally distributed, i.e. the probability for each normalized Dyck path is $\frac{1}{C_N},$ then the corresponding random path $E_N(t)$ converges in distribution to a Brownian excursion process of duration 1 (see \cite[Section 1.2]{MR2475013} and \cite{MR1989446}).
\end{remark}

Furthermore, we note that the probabilities for configurations are also considered for $\kappa>4,$ e.g. in\cite{puregeo}.\\
\textbf{Question:} What can be said about the limit of the probabilities for the set $S(\mathbf{p}_N)$ as $\kappa\to 0$?

\item The above questions can be extended to different settings like radial multiple SLE or multiple SLE in multiply connected domains (refer to \cite{Gras}). For instance, in \cite{MR2004294}, the author describes the Loewner equation for radial SLE where $N$ simple curves grow from the boundary of the unit disc $\D$ within $\D$ towards the interior point $0$. The radial analogue of Theorem \ref{LadyDi}, i.e. the coefficients in the Loewner equation are $\frac1{N}$, can be obtained simply by using the main result of \cite{MR1875671}.  
\end{enumerate}

\section{Trajectories of a certain quadratic differential}\label{Confucius}

Finally, we take a look at a Loewner equation that describes the growth of $N$ trajectories of a certain quadratic differential. By using the methods from the previous section, we obtain again an abstract differential equation for the limit case $N\to\infty$, which reduces to the Burgers equation in a special case.\\

Consider again $N$ points $x_{N,1}<x_{N,2}<...<x_{N,N}$ on $\R$ and the quadratic differential 

$$ Q_N(z)\,dz^2=\prod_{k=1}^N(z-x_{N,k})^2\prod_{j=1}^{M_N}(z-s_{N,j})^{\alpha_{N,j}}\prod_{j=1}^{M_N}(z-\overline{s_{N,j}})^{\alpha_{N,j}}\,dz^2, $$
where $M_N\in\N\cup\{0\}$, $s_{N,j}\in\Ha$ and $\alpha_{N,j}\in \Z$.\\

Then, $\R$ is a trajectory of $Q(z) dz^2$ and, for every $k$, there is exactly one trajectory starting from $x_{N,k}$ and going into the upper half-plane. As $Q$ has a zero of order 2 at $z=x_{N,k},$ this trajectory will form a $90\textdegree$-angle with $\R$ (see \cite[p. 213-215]{Pom:1975}).\\

Choose $N$ coefficients $\lambda_{N,k}\in[0,1]$ such that $\sum_{k=1}^N \lambda_{N,k}=1$. The Loewner equation

\begin{equation*}
g_{N,t}(z) = \sum_{k=1}^N \frac{2\lambda_{N,k}}{g_{N,t}(z)-V_{N,k}(t)}, \quad g_0(z) = z,
\end{equation*}

generates exactly these $N$ curves provided that the driving functions $V_{N,k}(t)$ satisfy
\begin{equation*}
\left\lbrace
\begin{aligned}
& \frac{d}{dt}V_{N,k}(t) = \sum_{j\not=k} \frac{2\lambda_{N,j}}{V_{N,k}(t)-V_{N,j}(t)} + \sum_{j=1}^{M_N} \frac{\alpha_{N,j} \lambda_{N,k}}{V_{N,k}(t)-S_{N,j}(t)} +  \sum_{j=1}^{M_N} \frac{\alpha_{N,j} \lambda_{N,k}}{V_{N,k}(t)-\overline{S_{N,j}(t)}}\\
& V_{N,k}(0) = x_{N,k}
\end{aligned}
\right.
\end{equation*}
where $S_{N,j}(t) = g_{N,t}(s_{N,j}).$

\begin{remark}
This follows from \cite[Theorem 5.1]{MR2561254}, where all the degrees $\mu_k^\pm$ are equal to $0$ in our case, as the trajectories form a $90\textdegree$-angle with the real axis, which is also a trajectory of $Q(z)dz^2$ (check p. 564 in \cite{MR2561254}).
\end{remark}

Next, define the probability measure $\mu_{N,t}=\sum_{k=1}^N \lambda_{N,k}\delta_{V_{N,k}(t)}$.

\begin{remark} If $M_N=0$ for all $N\in\N$ and $\lambda_{N,k}\leq C/N$ for all $k, N$ and some $C>0$, then, by the proof of Theorem \ref{thm:1}, it is easy to that the following holds: \\
If $\mu_{N,0}\to \mu$ as $N\to\infty$ such that \eqref{cond1} is satisfied, then the limit $\mu_t$ of $\mu_{N,t}$ exists, and the  transform $M_{t}(z)=\int_\R \frac{2}{z-u}\,d\mu_t(u)$ satisfies the Burgers equation
$$\frac{\partial}{\partial t} M_t = -M_t\cdot \frac{\partial}{\partial z}M_t(z),\quad
M_0(z) = \int_\R \frac{2}{z-u}\,d\mu(u).$$
Note that this is equation \eqref{burgers} with the $2$ replaced by $1$. The limit $g_t$ of $g_{N,t}$ satisfies $\frac{\partial}{\partial t}g_t = M_t(g_t)$ and a simple calculation shows that $\frac{\partial}{\partial t}M_t(g_t(z)) = 0,$ which implies that
$t\mapsto g_t(z_0)$, for $z_0\in\Ha$ fixed, describes a straight line, and that $M_t(g_t(z)) = M_0(z)$.
\end{remark}

Assume now $\lambda_{N,k}=\frac1{N}$ for all $k$ and $N$. \\
First, we introduce a second measure-valued process 
$$\sigma_{N,t} = \sum_{j=1}^{M_N} \frac{\alpha_{N,j}}{N}\delta_{S_{N,j}(t)},$$

and we assume that there exists a compact set $K\subset \Ha$ such that 

\begin{equation}\label{sigmacond}
\supp \sigma_{N,0} \subset K \quad \text{for all $N\in\N$.} \tag{d}
\end{equation}

\begin{theorem} Let $\lambda_{N,k}=\frac1{N}$ for all $k$ and $N.$ Assume that $\mu_{N,0}$ converges weakly to the probability measure $\mu$ such that \eqref{cond1} holds. Furthermore, assume that \eqref{sigmacond} holds and that $\sigma_{N,0}$ converges weakly to a finite (signed)
measure $\sigma$ as $N\to\infty.$ Then there exists $T>0$ such that $\{\mu_{N,t}\}_{N\in\N}$ is tight as a sequence in $\mathcal{M}(T).$ \\
Moreover, let $\mu_{N_k,t}$ be a converging subsequence with limit $\mu_t.$ Then the following two statements hold:
\begin{itemize}
\item[(i)]  $\sigma_{N_k,t}$ converges to a process $\sigma_t$ and 
\begin{eqnarray*}
&&\frac{d}{dt}\left(\int_\R f(x) \, d\mu_{t}(x)\right) = \int_{\R^2} \frac{f'(x)-f'(y)}{x-y} d\mu_{t}(x)d\mu_{t}(y) + 2\Re\left(\int_\Ha\int_\R  \frac{f'(x)}{x-z} d\mu_{t}(x)d\sigma_{t}(z) \right), \\
 && \frac{d}{dt}\left(\int_\Ha h(z) \, d\sigma_{t}(z)\right) = \int_\R\int_\Ha  \frac{2h'(z)}{z-y} d\sigma_{t}(z)d\mu_{t}(y),
\end{eqnarray*} 
for every $f\in C^2_b(\R,\C)$ and continuously differentiable $h:\Ha\to\C$ with $h'$ bounded.
\item[(ii)] Let  $M_t(z):=\int_\R\frac{2}{z-x}\, d\mu_t(x).$ Then $g_{N_k,t}$ converges locally uniformly to $g_t$ which satisfies the following system of (abstract) differential equations:
\begin{eqnarray*}
&&\frac{d}{dt}g_t(z) = M_t(g_t),\\
&&\frac{\partial}{\partial t}M_t(z) = -\frac{\partial}{\partial z}M_t(z)\cdot M_t(z)  +
  2\Re\left(\int_\Ha  \frac{M_t(z)}{(z - g_t(w))^2} -\frac{ M_t(g_t(w))}{(z - g_t(w))^2} - \frac{\frac{\partial}{\partial z}M_t(z)}{(z - g_t(w))} d\sigma(w)\right).
 \end{eqnarray*}
\end{itemize}
\end{theorem}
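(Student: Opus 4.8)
The plan is to reproduce the argument of Theorem~\ref{thm:1} almost verbatim, the one genuinely new ingredient being a uniform-in-$N$ estimate that keeps the supports of the auxiliary measures $\sigma_{N,t}$ inside a fixed compact subset of $\Ha$ for short times; this is exactly why the conclusion is only local in time. First I would observe that $\sigma_{N,t}=(g_{N,t})_*\sigma_{N,0}$ (since $S_{N,j}(t)=g_{N,t}(s_{N,j})$) and that the equation for $g_{N,t}$ is a chordal Loewner equation driven by $\mu_{N,t}$ alone, the $\sigma$-terms not entering it. Hence $\frac{d}{dt}\Im S_{N,j}(t)=-2\,\Im S_{N,j}(t)\int_\R|S_{N,j}(t)-y|^{-2}\,d\mu_{N,t}(y)$, so $\frac{d}{dt}(\Im S_{N,j}(t))^{2}\ge-4$ and $(\Im S_{N,j}(t))^{2}\ge(\Im s_{N,j})^{2}-4t$. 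With $c:=\inf_{w\in K}\Im w>0$ (positive by \eqref{sigmacond}) and $T:=c^{2}/8$ one gets $\Im S_{N,j}(t)\ge c/\sqrt2$ for all $t\in[0,T]$, $N\in\N$ and $j$; combined with $|\tfrac{d}{dt}S_{N,j}(t)|\le 2/\Im S_{N,j}(t)$ this confines every $S_{N,j}(t)$, $t\in[0,T]$, to one fixed compact set $K'\subset\Ha$. Finally, weak convergence of $\sigma_{N,0}$ to a finite signed measure gives $\sup_N|\sigma_{N,0}|(\Ha)<\infty$, and $|\sigma_{N,t}|(\Ha)=|\sigma_{N,0}|(\Ha)$.

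For tightness I would follow Theorem~\ref{thm:1}: tightness of $\{\mu_{N,t}\}_N$ in $\mathcal{M}(T)$ reduces to tightness of the real processes $\int_\R f\,d\mu_{N,t}$ and $\int_\R\varphi\,d\mu_{N,t}$. The $V_{N,k}$-system being deterministic, the product rule (in place of It\={o}'s formula) yields, for $f\in C^{2}_b(\R,\C)$,
\begin{equation*}
\frac{d}{dt}\int_\R f\,d\mu_{N,t}=\int_{\R^{2}}\frac{f'(x)-f'(y)}{x-y}\,d\mu_{N,t}(x)\,d\mu_{N,t}(y)+2\Re\!\left(\int_\Ha\!\int_\R\frac{f'(x)}{x-z}\,d\mu_{N,t}(x)\,d\sigma_{N,t}(z)\right)+r_N(t),
\end{equation*}
with $|r_N(t)|\le\|f''\|_\infty/N$. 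The first term is bounded because $(f'(x)-f'(y))/(x-y)$ is bounded, the second because $|x-z|^{-1}$ is bounded on $\R\times K'$ and $|\sigma_{N,t}|(\Ha)$ is uniformly bounded; the analogous identity for $\varphi$ is controlled by $\|\varphi'\|_\infty,\|\varphi''\|_\infty$ and \eqref{cond1}. The Ascoli--Arzel\`{a} argument (the deterministic counterpart of the one used in Theorem~\ref{thm:1}) then gives tightness of $\{\mu_{N,t}\}_N$. For $\{\sigma_{N,t}\}_N$: these are supported in the fixed compact $K'$ with uniformly bounded total variation, and satisfy $\frac{d}{dt}\int_\Ha h\,d\sigma_{N,t}=\int_\R\!\int_\Ha\frac{2h'(z)}{z-y}\,d\sigma_{N,t}(z)\,d\mu_{N,t}(y)$, again uniformly bounded; decomposing $\sigma_{N,t}$ into positive/negative and real/imaginary parts and repeating yields tightness of $\{\sigma_{N,t}\}_N$.

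To identify the limit, along a subsequence with $\mu_{N_k,t}\to\mu_t$ I would invoke Theorem~\ref{control} and the Continuous Mapping Theorem (as in the proof of Corollary~\ref{thm:2}) to get $g_{N_k,t}\to g_t$ locally uniformly with $\frac{d}{dt}g_t=M_t(g_t)$, $M_t(z)=\int_\R\frac{2}{z-u}\,d\mu_t(u)$; then $\sigma_{N_k,t}=(g_{N_k,t})_*\sigma_{N_k,0}\to(g_t)_*\sigma=:\sigma_t$, the supports all lying in $K$. Since $(f'(x)-f'(y))/(x-y)$, $f'(x)/(x-z)$ and $h'(z)/(z-y)$ are bounded continuous on $\R^{2}$, $\R\times K'$ and $K'\times\R$, integrating the two displayed ODEs over $[0,t]$ and passing to the limit gives the integral — hence the differential — form of the equations in (i). Part (ii) then follows by taking $f(x)=\frac{2}{z-x}\in C^{2}_b(\R,\C)$ in the equation of (i): a partial-fraction decomposition of $(f'(x)-f'(y))/(x-y)$ and of $f'(x)/(x-g_t(w))$ in the variable $x$, together with $\int_\R\frac{2}{(z-x)^{2}}\,d\mu_t(x)=-\frac{\partial}{\partial z}M_t(z)$ and $\sigma_t=(g_t)_*\sigma$, rearranges it into the stated system for $M_t$.

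The main obstacle is the confinement step: producing a single $T>0$ and a single compact $K'\subset\Ha$, independent of $N$, that contain all the poles $S_{N,j}(t)$ — this has no counterpart in Theorem~\ref{thm:1} and is precisely what makes the result local in time. Beyond that, the only extra care needed is the signed (indeed complex) nature of $\sigma$, handled by splitting into parts, and the remark that every $\sigma$-term above is weakly continuous exactly because the supports $\supp\sigma_{N,t}$ stay bounded away from $\R$.
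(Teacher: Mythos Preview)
Your proposal is correct and follows essentially the same architecture as the paper's proof: the pushforward identity $\sigma_{N,t}=(g_{N,t})_*\sigma_{N,0}$, a confinement of $\supp\sigma_{N,t}$ to a fixed compact $K'\subset\Ha$ for small $T$, the same It\={o}/chain-rule computation for $\int f\,d\mu_{N,t}$ and $\int h\,d\sigma_{N,t}$, tightness via Ascoli--Arzel\`a as in Theorem~\ref{thm:1}, convergence of $g_{N_k,t}$ via Theorem~\ref{control}, and the partial-fraction calculation with $f(x)=2/(z-x)$ for part~(ii). The only substantive difference is that where the paper appeals to a one-line ``normality argument'' for the confinement step, you supply the explicit ODE estimate $(\Im S_{N,j}(t))^2\ge(\Im s_{N,j})^2-4t$ to produce a concrete $T$; this is more self-contained but not a different route.
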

\begin{proof}
First we note that $\sigma_{N,t}$ is the pushforward of $\sigma_{N,0}$ w.r.t. $g_{N,t}$, i.e.
\begin{equation}\label{push}
\sigma_{N,t}=(g_{N,t})_*(\sigma_{N,0}).
\end{equation}
A normality argument plus assumption \eqref{sigmacond} would yield the existence of $T>0$ and a compact set $K_T\subset \Ha$ such that 
\begin{equation}\label{KingGeorge}\text{$\supp\sigma_{N,t}\subset K_T$ for all $t\in[0,T].$}
\end{equation}

Now let $f\in C^2_b(\R,\C).$ Then
\begin{eqnarray*}
 && \frac{d}{dt}\left(\int_\R f(x) \, d\mu_{N,t}(x)\right) = \frac{d}{dt}\left(\sum_{k=1}^N  \frac1N f(V_{N,k}(t)) \right)\\
 &=& \sum_{k=1}^N  \frac1N f'(V_{N,k}(t)) \cdot 
 \left(\sum_{j\not=k} \frac{2/N}{V_{N,k}(t)-V_{N,j}(t)} + 
\sum_{j=1}^{M_N} \frac{\alpha_{N,j}/N}{V_{N,k}(t)-S_{N,j}(t)} +  \sum_{j=1}^{M_N} \frac{\alpha_{N,j}/N}{V_{N,k}(t)-\overline{S_{N,j}(t)}}\right) \\
&=& \int_{x\not=y} \frac{f'(x)-f'(y)}{x-y} d\mu_{N,t}(x)d\mu_{N,t}(y) + 2\Re\left(\int_\Ha\int_\R  \frac{f'(x)}{x-z} d\mu_{N,t}(x)d\sigma_{N,t}(z) \right).
\end{eqnarray*} 
As in the proof of Theorem \ref{thm:1}, we conclude that the first term is bounded. The second one is bounded for all $t\in[0,T]$ because of \eqref{KingGeorge} and as $\sigma$ is finite. \\
Recall that $S_{N,j}(t) = g_{N,t}(s_{N,j})$. Thus, for any continuously differentiable $h:\Ha\to\C$, with $h'$ bounded, we get
\begin{eqnarray*}
 && \frac{d}{dt}\left(\int_\Ha h(z) \, d\sigma_{N,t}(z)\right) = \frac{d}{dt}\left(\sum_{j=1}^{M_N} \frac{\alpha_{N,j}}{N} h(S_{N,j}(t)) \right)\\
&=& \sum_{j=1}^{M_N}  \frac{\alpha_{N,j}}{N} h'(S_{N,j}(t)) \cdot 
\sum_{k=1}^N \frac{2/N}{S_{N,j}(t)-V_{N,k}(t)} = \int_\R\int_\Ha  \frac{2h'(z)}{z-y} d\sigma_{N,t}(z)d\mu_{N,t}(y),
\end{eqnarray*} 
which is also bounded for all $t\in[0,T]$.\\
As in the proof of  Theorem \ref{thm:1}, we conclude tightness of the sequences $\{\mu_{N,t}\}_{n\in\N}$ and $\{\sigma_{N,t}\}_{n\in\N}$. It should be noted that we do not need a condition like \eqref{cond1} for the convergence of $\sigma_{N,0}$, as we assumed that the support of $\sigma_{N,0}$ is contained in a compact set independent of $N.$\\

Now let $\mu_t$ be the limit of a converging subsequence $\mu_{N_k,t}$. Relation \eqref{push} implies that $\sigma_{N,t}$ converges to $\sigma_t:=(g_t)_*(\sigma)$ as $N\to\infty,$ and we obtain statement (i).\\
As in the proof of Corollary \ref{thm:2}, we conclude that $g_{N_k,t}$ converges locally uniformly to $g_t$ which satisfies
$$\frac{d}{d t}g_t = M_t(g_t).$$

Finally,  we can write the first equation of (i) as
 \begin{eqnarray*}
&&\frac{d}{dt}\left(\int_\R f(x) \, d\mu_{t}(x)\right) = \int_{\R^2} \frac{f'(x)-f'(y)}{x-y} d\mu_{N,t}(x)d\mu_{N,t}(y) + 2\Re\left(\int_\Ha\int_\R  \frac{f'(x)}{x-g_t(w)} d\mu_{t}(x)d\sigma(w) \right).
\end{eqnarray*} 
For $f(x)=\frac{2}{z-x},$ $z\in\Ha,$ this becomes (and we use the calculation from the proof of Corollary \ref{thm:2})
\begin{eqnarray*}
&&\frac{\partial}{\partial t}M_t(z) = -\frac{\partial}{\partial z}M_t(z)\cdot M_t(z) +\\
&&  2\Re\left(\int_\Ha\int_\R  \frac{2}{(z-x) (z - g_t(w))^2} - \frac{2}{( g_t(w)-x) (z - g_t(w))^2} + \frac{2}{(x - 
    z)^2 (z - g_t(w))} d\mu_{t}(x)d\sigma(w)\right)\\
&=& -\frac{\partial}{\partial z}M_t(z)\cdot M_t(z)  +
  2\Re\left(\int_\Ha  \frac{M_t(z)}{(z - g_t(w))^2} -\frac{ M_t(g_t(w))}{(z - g_t(w))^2} - \frac{\frac{\partial}{\partial z}M_t(z)}{(z - g_t(w))} d\sigma(w)\right)
 \end{eqnarray*}
and we are done.
\end{proof}
Figure \ref{fig7} shows a stream plot of the trajectories for $Q(z) = \prod_{k=0}^9(z-2k/9+1)^2 \cdot (z-i)^{-10}\cdot  (z+i)^{-10},$ and in Figure \ref{fig8}, $z=i$ is a zero of $Q$, i.e. $Q(z) = \prod_{k=0}^9(z-2k/9+1)^2 \cdot (z-i)^{10}\cdot  (z+i)^{10}.$
\begin{figure}[ht]
\rule{0pt}{0pt}
\hspace{1.2cm}
\begin{minipage}[hbt]{7cm}
\centering
\includegraphics[width=7cm]{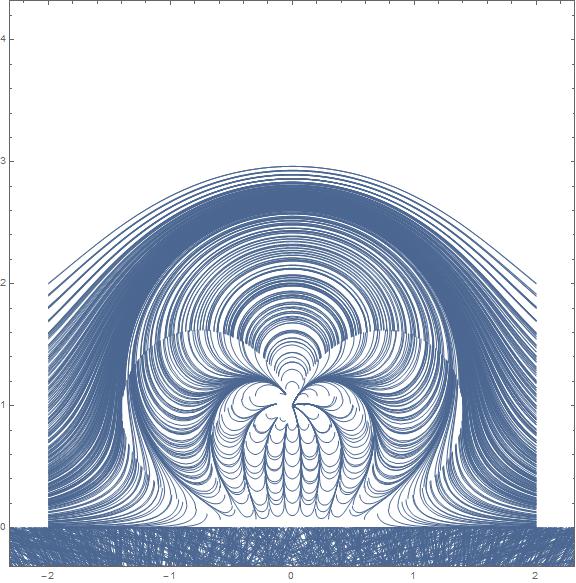}
\caption{Pole of order $N$ at $z=i.$}
\label{fig7}
\end{minipage}
\hspace{1.2cm}
\begin{minipage}[hbt]{7cm}
\centering
\includegraphics[width=7cm]{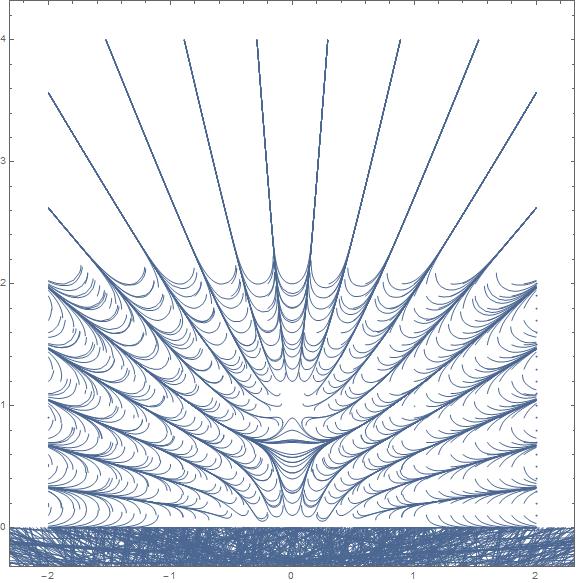}
\caption{Zero of order $N$ at $z=i.$}
\label{fig8}
\end{minipage}
\end{figure}

\def\cprime{$'$}
\providecommand{\bysame}{\leavevmode\hbox to3em{\hrulefill}\thinspace}
\providecommand{\MR}{\relax\ifhmode\unskip\space\fi MR }
\providecommand{\MRhref}[2]{%
  \href{http://www.ams.org/mathscinet-getitem?mr=#1}{#2}
}
\providecommand{\href}[2]{#2}

\vspace{1cm}
{\small
\begin{tabular}{ll}
\emph{Andrea del Monaco}: &	Universit\`{a} di Roma ``Tor Vergata'', 00133 Roma, Italy.\\[2mm]
\emph{Ikkei Hotta}: & Yamaguchi University, Ube 755-8611, Japan.\\[2mm]
\emph{Sebastian Schlei\ss inger}: & Universit\`{a} di Roma ``Tor Vergata'', 00133 Roma, Italy.
\end{tabular}

}

\end{document}